\newtheorem{theorem}{Theorem}[section]
\newtheorem{remark}{Remark}[section]
\newtheorem{definition}{Definition}[section]
\newtheorem{lemma}[theorem]{Lemma}
\newtheorem{pro}{Proposition}[section]
\newtheorem{corollary}[theorem]{Corollary}
\newcommand{\n}{\rho}
\renewcommand{\div}{ {\rm div }  }
\newcommand{\na}{\nabla }
\newcommand{\vp}{\varphi }
\newcommand{\pa}{\partial}
\newcommand{\bt}{\begin{theorem}}
\newcommand{\bl}{\begin{lemma}}
\newcommand{\el}{\end{lemma}}
\newcommand{\et}{\end{theorem}}
\newcommand{\ga}{\gamma}
\newcommand{\te}{\theta}
\newcommand{\de}{\delta}
\newcommand{\la}{\label}
\newcommand{\om}{\Omega}
\newcommand{\ol}{\overline}
\newcommand{\bn}{\begin{eqnarray}}
\newcommand{\en}{\end{eqnarray}}
\newcommand{\bnn}{\begin{eqnarray*}}
\newcommand{\enn}{\end{eqnarray*}}
\newcommand{\bnnn}{\begin{eqnarray*}}
\newcommand{\ennn}{\end{eqnarray*}}
\newcommand{\ben}{\begin{enumerate}}
\newcommand{\een}{\end{enumerate}}
\newcommand{\ba}{\begin{aligned}}
\newcommand{\ea}{\end{aligned}}
\newcommand{\be}{\begin{equation}}
\newcommand{\ee}{\end{equation}}
\def\norm[#1]#2{\|#2\|_{#1}}
\def\lam{\lambda}
\def\ep{\varepsilon}
\title{ Nonlinearly Exponential Stability
for Lions-Feireisl's Weak Solutions
to the  Barotropic Compressible Navier-Stokes Equations
with Large Potential External Forces
 \thanks{Partially supported by NSFC Grant Nos. 12361046, 12161019.}}
\date{}
\author{$\text{Lingping Kang}^{a,b}, \text{Yanfang Peng}^{c}, \text{Chengfeng Xiong}^{d,e}
\thanks{Email addresses: lingping@email.ncu.edu.cn(L.P. Kang), pyfang2005@sina.com(Y.F. Peng),\newline xiongchengfeng20@mails.ucas.ac.cn(C.F. Xiong)}$\\
	a. School of Mathematics and Computer Sciences,\\
	Nanchang University, Nanchang 330031, P. R. China;\\
	b. Institute of Mathematics and Interdisciplinary Sciences,\\
	Nanchang University, Nanchang 330031, P. R. China;\\
	c. School of Mathematical Sciences, \\Guizhou Normal University, Guiyang 550001, P. R. China;\\
    d. School of Mathematical Sciences,\\ University of Chinese Academy of Sciences,
     Beijing 100190, P. R. China;\\
    e. Institute of Applied Mathematics,\\ Academy of Mathematics and Systems Science (AMSS),\\
     Chinese Academy of Sciences, Beijing 100190, P.R. China.}
\begin{document}
\maketitle
\begin{abstract}
The large time behavior for  Lions-Feireisl's finite energy weak solutions to the  barotropic  compressible Navier-Stokes equations with  large potential external forces in  three-dimensional (3D) bounded domains is considered. Although the equilibrium state of density is not a constant anymore due to   the non-constant external forces,
by constructing a suitable Lyapunov functional and using the extra integrability of the density,
after expanding the  difference of the density
and its steady state in a Taylor series with respect to the  difference of some power function of  density
 and that of the steady density,
it is proved that any Lions-Feireisl's finite energy weak solution
would decay exponentially to the equilibrium state as time tends to infinity.

\end{abstract}

\textbf{Keywords}:  Compressible Navier-Stokes equations; Large potential  external forces;  Exponential stability; Lyapunov functional.

\section{Introduction and main results}
The three-dimensional barotropic  compressible Navier-Stokes equations  read as follows:
\be\la{1.1}
\begin{cases} \rho_t + \div(\rho u) = 0,\\
 (\rho u)_t + \div(\rho u\otimes u) -\mu\Delta u -(\mu + \lam)\nabla\div u+ \nabla P = \rho f.
\end{cases}
\ee
Here  $t\ge 0, x=(x_1,x_2,x_3)\in \Omega\subset\mathbb{R}^3$. Moreover, $\rho=\n(x,t)$ and $u=(u_1(x,t),u_2(x,t), u_3(x,t))$ represent, respectively, the density and velocity, and the pressure $P$ is given by
\be\la{1.2}
P(\rho) = R\rho^{\gamma},\quad R>0, \ga>1.
\ee
In the sequel, without loss of generality, we set $R=1$.

The constant viscosity coefficients $\mu$ and $\lam$ satisfy the physical hypothesis
\be\la{1.3}
\mu>0, \quad 2\mu+3\lam\geq 0.
\ee
The external force $f(x)$ is a gradient of a scalar potential, that is
\be\la{F}
f(x)=\na F(x).
\ee
We consider the  homogeneous Dirichlet boundary conditions
\be\la{1.5}
 u|_{\partial\Omega}=0,
 \ee
 and the initial conditions
 \be\la{1.4}
 \rho(x,0)=\rho_0, \quad \rho u(x,0)=m_0.
 \ee

There is a large amount of literature  on the well-posedness and large time behavior of compressible Navier-Stokes equations.
Now, we  list some related important  results in the multi-dimensional case.  On one hand, for the compressible Navier-Stokes equations \eqref{1.1} without external forces, when the density is strictly away from vacuum,
the local well-posedness of classical or strong solutions has been studied in \cite{Se,N,So,T}.
The existence of global classical solutions was firstly established  by  Matsumura-Nishida \cite{MN1}  for initial
    data close to a non-vacuum equilibrium in $H^3$. Later, for  discontinuous initial data, Hoff \cite{H1,H2} proved   the existence of global solutions and   introduced a new type of a priori estimate on the material derivative $\dot{u}$.  Recently, Huang-Li-Xin \cite{HLX}
     stated the global existence and uniqueness of classical solutions
    with smooth initial data that are of small energy
    but possibly large oscillations and  a constant far-field behavior that could be either vacuum or non-vacuum. As for   \eqref{1.1} with external forces,  most of work focus on   the case of  small external forces,  see \cite{MN2,MN3,ST} for details.
For \eqref{1.1} with large external forces, the case is more complicated. Li-Zhang-Zhao\cite{LZZ} proved that the Cauchy problem has a unique global strong solution with large oscillations and interior vacuum provided that the initial data are of small energy and the unique steady state is strictly away from vacuum.
   Then,  when the initial energy is suitably small,
    but the density allows large oscillations and contains vacuum states,  Cai-Huang-Shi\cite{CHS} showed the global existence of strong or classical solutions
    to the initial boundary value problem for \eqref{1.1}.  Moreover, they established the exponential decay in time to the equilibrium in certain Sobolev spaces for the global strong or classical solutions. Furthermore,  for the existence of weak solutions with large data in the three-dimensional case, the major breakthrough is due to  Lions \cite{L2},
where  the  existence of global weak solutions were established  under the condition of $\ga\geq \frac{9}{5}$,
    which was  further released to $\ga>\frac{3}{2}$ later by Feireisl-Novotn\'{y}-Petzeltov\'a\cite{FNP}.
Moreover, Jiang-Zhang \cite{JZ} demonstrated the existence of global weak solutions for any $\ga>1$, when the initial data possess certain symmetry properties.

On the other hand, regarding  the large time behavior of  solutions, Matsumura-Nishida \cite{MN1, MN2, MN3} proved that the existence of global solutions
near a constant equilibrium state for the Cauchy problem in $\mathbb{R}^3$, in bounded domains, in the half space and exterior domains, respectively.
  Moreover,  under different boundary conditions, Feireisl-Petzeltov\'a \cite{FP} and  Novotn\'{y}-Str\v{a}skraba \cite{NS}
    showed, respectively, that the density of global weak solutions converges to the steady state  in $L^p$ space for some $p > \frac{3}{2}$
    as time tends to infinity, provided that there exists a unique steady state.
Later, Padula \cite{P} demonstrated that in a bounded domain,
    the rest state is exponentially stable with respect to a large class of perturbations in the absence of vacuum.
Recently, Fang-Zi-Zhang \cite{FZZ} proved that
    any finite energy weak solutions to the problem (1.1)
    without external forces in a bounded domain decay exponentially to the equilibrium state in $L^2$ norm. It should be pointed out that in {\cite{FZZ,P},  the existence of weak solutions  heavily relies on the basic assumption that the density is bounded from above or below.
    More recently, without any additional bounds assumptions on the density,
 Peng-Shi-Wu \cite{PSW}  stated  that any finite energy weak solutions obtained by Lions \cite{L2}
    and Feireisl-Novotn\'{y}-Petzeltov\'a \cite{FNP} decay exponentially to the equilibrium state. However, whether Lions-Feireisl’s
    finite energy weak solutions
    to the initial boundary value problem for \eqref{1.1}
    with large external forces
    (whose existence was established by \cite{L2,FNP}) decay
    exponentially to the equilibrium state or not remains open.
In this paper, we will  give  an affirmative   answer to this problem.

Before going further, we denote  $(\rho_s, u_s)$ as the equilibrium state of (\ref{1.1}). It is easy to see that $(\rho_s, u_s)$ satisfies the following  problem
\begin{equation}
		\begin{cases}
			\div(\rho_s u_s)=0,  &x \in \Omega, \\
			-\mu\Delta u_s-(\lam+\mu)\nabla\div u_s+\nabla P(\rho_s)= \rho_s\nabla F, & x \in \Omega, \\
u_s=0, &x\in \partial \Omega,\\
\int_\Omega \rho dx=\int_\Omega \rho_s dx.
		\end{cases}
		\label{1.8}
	\end{equation}
 As indicated by Lemma \ref{lem1} in Section 2,  there exists a
unique  equilibrium solution $(\rho_s,0)$ to (\ref{1.8}), satisfying
\be\la{rho}
\rho_s\in W^{1,\infty}(\Omega),\quad \inf_{\Omega}\rho_s>0,
\ee
  under the condition that
\be\la{mas}
\int_\Omega\bigg(\frac{\gamma-1}{\gamma}\big(F-\inf_{\Omega}{F}\big)\bigg)^{\frac{1}{\gamma-1}}dx<\int_\Omega \rho_0dx,
\ee
and   $\rho_s$ satisfies
\be
\begin{cases}\label{1.9}
\nabla P(\rho_s)= \rho_s\nabla F,\\
 \int_\Omega \rho_s dx= \int_\Omega \rho_0 dx.
\end{cases}
\ee
Here, we should point out that for the condition \eqref{mas}, we will make further discussion in Section 4.

Now, we introduce the definition of  finite energy weak solutions.
\begin{definition} {\rm( \cite{FNP, L2}, Finite energy weak solutions)}
A pair of functions $(\rho,u)$ will be denoted
as a finite energy weak solution of the problems \eqref{1.1} on $(0,\infty)\times \Omega$,
if the following statements hold:

$\bullet$ $\rho\geq 0$, $\rho\in L^\infty(0,\infty;L^\gamma(\Omega))$, $u\in L^2(0,\infty;H_0^1(\Omega))$.

$\bullet$ Equations \eqref{1.1} are satisfied in $\mathcal{D}'((0,\infty)\times\Omega)$.
Moreover, $\eqref{1.1}_1$ holds in $\mathcal{D}'((0,\infty)\times\mathbb{R}^3)$ provided $\rho$, $u$ were prolonged to be zero on $\mathbb{R}^3 \backslash \Omega$.

$\bullet$ The energy
\be\ba
  E(t) := \int_\om\left(\frac{1}{2}\rho|u|^2 + \frac{1}{\ga-1}\rho^\ga-\rho F\right)dx \ea\ee
is locally integrable on $(0,\infty)$ and for any $0 \leq \vp
\in \mathcal{D}(0,\infty)$, it holds that
\be\ba\label{engyinq}
  -\int_0^\infty\vp_tE(t)dt +
  \int_0^\infty\vp\int_\om\bigg(\mu|\na u|^2+(\lambda+\mu)(\div u)^2\bigg)dxdt\leq0.
\ea\ee

$\bullet$ Equation $\eqref{1.1}_1$ holds in the sense of renormalized solutions; more specifically, the equation
\be\la{1.17}
b(\rho)_t+\div(b(\rho)u)+(b'(\rho)\rho-b(\rho))\div u=0
\ee
holds in $\mathcal{D}'((0,\infty)\times \Omega)$ for any $b\in C^1(\mathbb{R})$ such that
$b'(z)=0$ for all $z\in\mathbb{R}$ large enough, say, $|z|\geq M$, where the constant $M$ may vary for different functions $b$.
\end{definition}
Next, we state the following result concerning the existence of finite energy weak solutions to
the problem \eqref{1.1}--\eqref{1.4} established
by Lions\cite{L2} and Feireisl-Novotn\'y-Petzeltov\'a\cite{FNP}.
\begin{lemma}   \la{lem3}
Assume that $\Omega\subset\mathbb{R}^3$  is a bounded domain of the class of $C^{2+k}$, $k>0$. Let $F \in W^{1,\infty}(\om)$,
$\gamma>\frac{3}{2}$ and the initial data $(\rho_0, m_0)$ satisfy
\[
0\leq\rho_0\in L^\gamma(\Omega),\ \frac{|m_0|^2}{\rho_0}\in L^1(\Omega),
\]
with $m_0=0$ almost everywhere on the set $\{x\in \Omega| \rho_0(x)=0\}$.
Then there exists a finite energy weak solution $(\rho, u)$
of the problem \eqref{1.1}--\eqref{1.4} satisfying, for almost everywhere $t>0$,
\be
\int_\Omega \rho(t)dx=\int_\Omega \rho_0dx,
\ee
and for any $T>0$,
\be
\rho\in L^{\gamma+\theta_0}(0,T;\Omega),
\ee
with some positive constant $\theta_0\leq \frac{2\gamma-3}{3}$.
\end{lemma}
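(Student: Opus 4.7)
The plan is to construct the solution by the standard Feireisl--Novotn\'y--Petzeltov\'a three-level approximation scheme, carried out for the momentum equation with the extra term $\rho\nabla F$, and then verify that this term is a harmless lower-order perturbation of the force-free theory because $F\in W^{1,\infty}(\Omega)$. First I would set up the approximate system: a Faedo--Galerkin scheme for the velocity, an artificial viscosity $\ve\Delta\rho$ added to the continuity equation together with a Neumann boundary condition, and an artificial pressure $\delta\rho^\beta$ for some $\beta$ sufficiently large (e.g.\ $\beta\ge\max\{4,\gamma\}$). The momentum balance is then posed on the finite-dimensional space $X_n\subset H_0^1(\Omega)$, with nonlinear right-hand side containing $\rho_n\nabla F$.

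Next I would derive the basic energy identity on the approximate level. Multiplying the approximate momentum equation by $u_n$ and using the approximate continuity equation together with the renormalization $b(\rho)=\frac{1}{\gamma-1}\rho^\gamma$ (and the analogous one for the artificial pressure), the external force contributes
\begin{equation*}
\int_\Omega \rho_n u_n\cdot\nabla F\,dx=-\int_\Omega F\,\div(\rho_n u_n)\,dx=\frac{d}{dt}\int_\Omega \rho_n F\,dx,
\end{equation*}
so that the natural energy $E_n(t)=\int_\Omega\bigl(\tfrac12\rho_n|u_n|^2+\tfrac{1}{\gamma-1}\rho_n^\gamma+\tfrac{\delta}{\beta-1}\rho_n^\beta-\rho_n F\bigr)dx$ is non-increasing up to the artificial viscosity terms. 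Combined with mass conservation $\int_\Omega\rho_n\,dx=\int_\Omega\rho_0\,dx$ and the trivial bound $|\int\rho_n F|\le\|F\|_\infty\int\rho_0$, this yields the uniform estimates $\sqrt{\rho_n}u_n\in L^\infty(0,T;L^2)$, $\rho_n\in L^\infty(0,T;L^\gamma)$, $u_n\in L^2(0,T;H_0^1)$ independently of $n,\ve,\delta$.

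The passage to the limit $n\to\infty$, $\ve\to 0$, $\delta\to 0$ then follows verbatim the Lions / Feireisl--Novotn\'y--Petzeltov\'a argument. The extra integrability $\rho\in L^{\gamma+\theta_0}_{\mathrm{loc}}$ for some $\theta_0\in(0,\tfrac{2\gamma-3}{3}]$ is obtained by testing the momentum equation with $\varphi=\psi(t)\,\mathcal{B}[\rho^{\theta_0}-(\rho^{\theta_0})_\Omega]$, where $\mathcal{B}$ is the Bogovskii operator; the new force term contributes $\int\rho\nabla F\cdot\varphi$, which is controlled by $\|F\|_{W^{1,\infty}}\|\rho\|_{L^\gamma}\|\varphi\|_{L^{\gamma/(\gamma-1)}}$ and so does not affect the standard estimate. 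Strong convergence of $\rho_n$ (respectively $\rho^{\ve}$, $\rho^\delta$) is then obtained through the effective viscous flux identity
\begin{equation*}
\lim_n\int\bigl(P(\rho_n)-(2\mu+\lambda)\div u_n\bigr)\rho_n\,dx=\int\bigl(\overline{P(\rho)}-(2\mu+\lambda)\div u\bigr)\rho\,dx,
\end{equation*}
which is insensitive to the addition of the smooth, $n$-independent force $\rho_n\nabla F$, together with the propagation of oscillations defect measure under the renormalized continuity equation \eqref{1.17}.

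The only conceptual obstacle is verifying that the potential force does not disrupt the compactness arguments for $\rho$, and this reduces to the observation above that $\rho\nabla F$ is a compact lower-order term in the momentum equation once one has uniform bounds on $\rho$ in $L^\gamma$. The mass conservation $\int\rho(t)\,dx=\int\rho_0\,dx$ is inherited from the continuity equation after renormalization (the test function $b(z)=z$ is admissible by the truncation argument of DiPerna--Lions), which gives the stated conclusion.
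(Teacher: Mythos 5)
The paper does not prove Lemma \ref{lem3}: it is quoted verbatim from Lions \cite{L2} and Feireisl--Novotn\'y--Petzeltov\'a \cite{FNP}, so there is no ``paper's own proof'' to compare against. Your sketch reproduces exactly the strategy of those references (three-level approximation, energy estimate, Bogovskii-operator pressure estimate, effective viscous flux, renormalized continuity equation), with the only new ingredient being the observation that the potential force $\rho\nabla F$ with $F\in W^{1,\infty}(\Omega)$ is a lower-order perturbation that is absorbed at every level; this is correct and is how the cited works handle bounded potentials.

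One place where the sketch, as literally written, has a small inaccuracy worth flagging: at the artificial-viscosity level the continuity equation reads $\partial_t\rho_n+\div(\rho_n u_n)=\ve\Delta\rho_n$ with Neumann boundary data, so
\begin{equation*}
-\int_\Omega F\,\div(\rho_n u_n)\,dx=\frac{d}{dt}\int_\Omega \rho_n F\,dx+\ve\int_\Omega\nabla F\cdot\nabla\rho_n\,dx,
\end{equation*}
and the extra term $\ve\int\nabla F\cdot\nabla\rho_n$ must be absorbed into the artificial dissipation $\ve\|\nabla\rho_n\|_{L^2}^2$ by Young's inequality (or $F$ must be mollified at that level). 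Your phrase ``up to the artificial viscosity terms'' acknowledges this, but it is the one genuine point of care in adapting the force-free theory, and it works precisely because $\nabla F\in L^\infty$. Everything else — the bound $|\int\rho_n F|\le\|F\|_{L^\infty}\int\rho_0$, the insensitivity of the effective viscous flux identity to a fixed smooth source, and the Bogovskii estimate $\int_0^T\int_\Omega\rho^{\gamma+\theta_0}$ with $\theta_0=\tfrac{2\gamma-3}{3}$ — is as in the force-free case.
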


 For an integrable function $f$,  we denote
\be\nonumber
\ol{f} := \frac{1}{|\om|}\int_{\om}f dx,
\ee
as the mean value of $f$ over $\Omega$.

We are in a position to state our main result as follows.
\begin{theorem}\la{t1}
Assume that the conditions of  Lemma \ref{lem3} and  \eqref{mas} hold.
Then, there exist positive constants $C_1$ depending  only on
$\om,\ \ga,\ \mu,\ \lam,\ F,\ E_0,\ \ol{\n_0}$
and  $T_0$ depending on
$\om,\ \ga,\ F$, $\n$ and $u$ such that $(\n,u)$,
the finite energy weak solution to \eqref{1.1}--\eqref{1.4},
whose existence is guaranteed by Lemma \ref{lem3},
satisfies the decay property
\be\label{expdec}
\int_\Omega\bigg(\frac{1}{2}\rho|u|^2+G(\rho,\rho_s)\bigg)dx
\leq 8E_0 e^{-C_1(t-T_0)}, \quad \mathrm{for \ almost\ every}\  t>T_0,
\ee
with
\be\label{g}
G(\rho, \rho_s):=\rho\int^\rho_{\rho_s}\frac{h^\gamma-{\rho^\gamma_s}}{h^2}dh,
\ee
and \be\ba
E_0 = \int_\om \left(\frac{|m_0|^2}{2\n_0} + G(\n_0,\n_s)\right) dx.
\ea\ee
\end{theorem}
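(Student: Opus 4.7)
The first step is to identify a Lyapunov functional whose monotonicity follows directly from \eqref{engyinq}. Using the equilibrium identity $\na P(\n_s)=\n_s\na F$, which integrates to $F=\tfrac{\ga}{\ga-1}\n_s^{\ga-1}+C$ for some constant $C$, together with mass conservation $\int_\om\n\,dx=\int_\om\n_s\,dx$, a direct computation shows
\be\nonumber
\int_\om G(\n,\n_s)\,dx=\int_\om\left(\tfrac{1}{\ga-1}\n^\ga-\n F\right)dx-\int_\om\left(\tfrac{1}{\ga-1}\n_s^\ga-\n_s F\right)dx.
\ee
Setting $\mathcal{L}(t):=\int_\om(\tfrac12\n|u|^2+G(\n,\n_s))\,dx$ and $D(\tau):=\int_\om(\mu|\na u|^2+(\lam+\mu)(\div u)^2)\,dx$, the energy inequality is thus equivalent to $\mathcal{L}(t_2)+\int_{t_1}^{t_2}D(\tau)\,d\tau\le\mathcal{L}(t_1)$. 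The convexity of $h\mapsto \tfrac{h^\ga}{\ga-1}-\tfrac{\ga h\n_s^{\ga-1}}{\ga-1}+\n_s^\ga$ (attaining zero at $h=\n_s$) gives $G\ge 0$, so $\mathcal{L}$ is non-negative and non-increasing.

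The strategy is then to establish the Gr\"onwall-type inequality $\int_{t_1}^{t_2}\mathcal{L}(\tau)\,d\tau\le K\int_{t_1}^{t_2}D(\tau)\,d\tau$ for $t_2>t_1\ge T_0$; combined with monotonicity this yields $\mathcal{L}(t_2)\le\tfrac{K}{K+(t_2-t_1)}\mathcal{L}(t_1)$, and iteration over intervals of length $K$ produces the exponential decay \eqref{expdec} (the constant $8$ absorbs the transient on $[0,T_0]$). The kinetic part of $\mathcal{L}$ is the easy half: since $\n\in L^\infty(0,\infty;L^\ga(\om))$ with $\ga>\tfrac32$, H\"older and Poincar\'e give $\int_\om\n|u|^2dx\le \|\n\|_{L^{3/2}}\|u\|_{L^6}^2\le C\|\na u\|_{L^2}^2$, which is immediately absorbed in $D$.

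The hard half is controlling $\int_\om G(\n,\n_s)\,dx$ by $D$, for which I would use the Bogovskii operator. Rewriting the momentum equation as
\be\nonumber
(\n u)_t+\div(\n u\otimes u)-\mu\lap u-(\lam+\mu)\na\div u+\na[P(\n)-P(\n_s)]=(\n-\n_s)\na F,
\ee
I would test it against $\vp=\mathcal{B}(\n-\n_s)$, which is well-defined since $\int_\om(\n-\n_s)\,dx=0$. After integrating by parts in space and time on each subinterval $[t_1,t_2]$, the principal identity has the schematic form
\be\nonumber
\int_{t_1}^{t_2}\!\int_\om(\n^\ga-\n_s^\ga)(\n-\n_s)\,dxd\tau = -\Big[\int_\om\n u\cdot\vp\,dx\Big]_{t_1}^{t_2}+\text{(viscous \& convective \& force terms)},
\ee
where the boundary terms are bounded by $C\sqrt{\mathcal{L}(t_1)\mathcal{L}(t_2)}$ via the continuity of $\mathcal{B}$, the viscous and force terms are controlled by $D$ after Cauchy--Schwarz, and the nonlinear convective contribution $\int\n u\otimes u:\na\vp\,dx$ is estimated using precisely the extra integrability $\n\in L^{\ga+\te_0}$ provided by Lemma~\ref{lem3}.

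The main obstacle, and the point where the "Taylor expansion" announced in the abstract enters, is converting the quantity $(\n^\ga-\n_s^\ga)(\n-\n_s)$ into a bound on $G(\n,\n_s)$. Near $\n=\n_s$ both are comparable to $(\n-\n_s)^2$, but for density far from $\n_s$ the two grow at different rates ($\n^{\ga+1}$ versus $\n^\ga$), so a pointwise comparison must exploit $\inf_\om\n_s>0$ from \eqref{rho}. Writing $\n-\n_s=\tfrac{1}{\ga}\n_s^{1-\ga}(\n^\ga-\n_s^\ga)+R(\n,\n_s)$ by Taylor expansion in the variable $\n^\ga-\n_s^\ga$, the leading term provides the desired comparison $G(\n,\n_s)\le C(\n^\ga-\n_s^\ga)(\n-\n_s)$ on bounded-density regions, while the higher-order remainder is absorbed using the $L^{\ga+\te_0}$ reservoir in the large-density region. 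Assembling the kinetic bound, the Bogovskii identity, and this comparison closes the Gr\"onwall inequality and hence yields \eqref{expdec}; I expect the delicate bookkeeping of constants in this last step (in particular ensuring that the remainder $R$ is strictly subordinate to the dissipation on the time interval $[T_0,\infty)$) to be the principal technical difficulty.
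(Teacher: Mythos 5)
Your proposed Lyapunov functional and the broad structure (energy inequality plus a Bogovskii test giving a Gr\"onwall inequality) agree with the paper, but two steps at the core of the argument would break down as you have set them up.

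\textbf{Integrability of the test.} You propose testing against $\mathcal{B}(\rho-\rho_s)$, which generates the bulk term $\int(\rho^\gamma-\rho_s^\gamma)(\rho-\rho_s-\overline{\rho-\rho_s})\,dx$. This requires $\rho^{\gamma+1}\in L^1_{\rm loc}$, but Lemma~\ref{lem3} only provides $\rho\in L^{\gamma+\theta_0}$ with $\theta_0\le\frac{2\gamma-3}{3}$, so $\gamma+\theta_0<\gamma+1$ whenever $\gamma<3$. Thus for the full range $\gamma>\frac32$ the pairing is not even defined, and the Sobolev embedding you later need for the force term also fails ($\mathcal{B}(\rho-\rho_s)\in W^{1,\gamma}$ does not embed in $L^\infty$ when $\gamma\le 3$). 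This is precisely why the paper builds the test from $\rho^\theta-\rho_s^\theta$ with a small exponent $\theta\le\min\{\frac{2\gamma}{3}-1,\frac12,\frac\gamma6\}$: the bulk term $(\rho^\gamma-\rho_s^\gamma)(\rho^\theta-\rho_s^\theta)$ then sits in $L^1$, the comparison \eqref{x2} with $G(\rho,\rho_s)$ still works, and $W^{1,\gamma/\theta}\hookrightarrow L^\infty$ holds since $\gamma/\theta\ge6>3$.

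\textbf{The force term is not subordinate to dissipation.} You state that the ``force term'' obtained from $(\rho-\rho_s)\nabla F$ can be ``controlled by $D$ after Cauchy--Schwarz''. It cannot: that term scales like $\|\nabla F\|_{L^\infty}\int_\Omega G(\rho,\rho_s)\,dx$, which is of the same order as the quantity you are trying to bound and is multiplied by a constant that is allowed to be large. There is no smallness there, and dissipation does not help. The paper's resolution — and arguably its main new device over \cite{PSW} — is to premultiply $(\ref{1.1})_2$ by $\rho_s^{-1}$ and invoke the algebraic identity \eqref{equation}, which converts $\nabla\rho^\gamma-\rho\nabla F$ into $\nabla\bigl(\rho_s^{-1}(\rho^\gamma-\rho_s^\gamma)\bigr)-(\gamma-1)G(\rho,\rho_s)\nabla\rho_s^{-1}$. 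The residual force term is then $I_7\le C\|G\|_{L^1}\|\nabla\rho_s^{-1}\|_{L^\infty}\|H_\varepsilon\|_{L^\infty}$, and it is $\|H_\varepsilon\|_{L^\infty}\lesssim\|\rho^\theta-\rho_s^\theta\|_{L^{\gamma/\theta}}$ that becomes small after a large time $T_0$ via Lemma~\ref{lem2}. This is also where the factor $\rho_s^{-1}$ in the test function, and the assumption $0<\rho_s\in W^{1,\infty}$ from \eqref{rho}, become indispensable. Your test omits this factor entirely.

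Finally, the Taylor expansion you describe is aimed at a pointwise comparison $G\le C(\rho^\gamma-\rho_s^\gamma)(\rho-\rho_s)$; that comparison is actually pointwise true for all $\rho$ (and the paper's version \eqref{x2} is obtained directly, without Taylor). The Taylor expansion \eqref{texp} in the paper serves a different purpose: since the test function is built from $\rho^\theta-\rho_s^\theta$ rather than $\rho-\rho_s$, the mean $\overline{\rho_s^{1-\theta}(\rho^\theta-\rho_s^\theta)}$ does not vanish, and the expansion is used to show this mean is $O(\int G\,dx)$, permitting the sign-indefinite cross term $K_3$ to be absorbed after $T_0$. As written, your proposal does not identify or handle that term.
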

\begin{remark}\la{rmk4}
We  emphasize that in Theorem \ref{t1},  $T_0$ actually depends on the weak solution $(\n,u)$,
  due to the fact that  we need Lemma \ref{lem2} to close the estimate of
  $G(\n,\n_s)$ (see \eqref{x21}--\eqref{x22}).   However, the convergence in
  Lemma \ref{lem2} depends on the specific weak solution $(\n,u)$ itself.
 Thus, it is an interesting question whether $T_0$ can be  independent of the weak solution $(\n,u)$ or not.  This is left for the future.
\end{remark}
\begin{remark}\la{rmk2}
 It should be noted that  proof of Theorem \ref{t1} relies heavily on the property that
  the solution $(\rho_s, 0)$ to the system \eqref{1.8} satisfies the  $0<\rho_s\in W^{1,\infty}(\om)$.
  On  one hand, to address the largeness of $\na F$, we need to convert it into
  $G(\rho,\rho_s)$ with the aid of \eqref{equation}, which requires
   $\rho_s > 0$ to  validate the multiplication by $\rho_s^{-1}$.
  On the other hand,   in the context of finite energy weak solutions,
  $G(\rho,\rho_s)$ is only  an $L^1$ function with respect to $x$.
  Therefore, to match
  $\na \rho_s^{-1}$ with $G(\rho,\rho_s)$, the condition $0<\rho_s\in W^{1,\infty}(\om)$ is essential(see \eqref{equation}).
 Moreover, it would be of interest to investigate the case that $\rho_s \in W^{1,q}(\om)$ for some $q > 3$
  and $\rho_s$ contains vacuum, which would be left for the future.
\end{remark}

\begin{remark}\la{rmk3}
In our discussion, \eqref{mas} is indeed both a necessary and sufficient
  condition for the existence of equilibrium solution $(\rho_s,0)$ to \eqref{1.8}
  with $\displaystyle \inf_\om \rho_s > 0$. Further details will be left in  Section 4.
\end{remark}

We now make some comments on the analysis of this paper.
To establish Theorem \ref{t1}, with the aid of the energy inequality and the
conservation of the energy (see (\ref{x1})), the key issue is to
establish new decay estimates for $G(\rho,\rho_s)$ (see (\ref{g}) for the definition).
 Compared with \cite{PSW}, the essential difficulty lies in the fact that $\rho_s$ is no longer a constant
due to the large potential forces. To overcome the difficulty,
it is crucial to  reselect a  suitable test function replacing $\mathcal{B}(\rho^\te - \rho_s^\te - \overline{\rho^\te - \rho_s^\te})$ in \cite{PSW}.
Firstly,  to overcome the lack of smallness due to the large potential forces, motivated by \cite{HLX},  we multiply both sides of $(\ref{1.1})_2$ by $\rho_s^{-1}$ to get
\be\ba\nonumber
  \rho_s^{-1}(\na\rho^\ga - \rho\na F)
&=\rho_s^{-1}(\nabla(\rho^\gamma-\rho_s^\gamma)
  -\gamma(\rho-\rho_s)\rho_s^{\gamma-2}\nabla\rho_s) \\
&
  =\nabla(\rho_s^{-1}(\rho^\gamma-\rho_s^\gamma))
  -(\gamma-1)G(\rho,\rho_s)\nabla\rho^{-1}_s.
\ea\ee
In this way, the large potential external force $\na F$ can be transformed
into the term  $G(\rho,\rho_s)$ with smallness,
at the expense of incurring a multiplier $\na\rho_s^{-1}$. Secondly, if we only choose $\rho_s^{-1}\mathcal{B}(\rho^\te - \rho_s^\te - \overline{\rho^\te - \rho_s^\te})$ as the test function, then by standard arguments, we find that the crucial term
\be\label{*}
-\int_0^\infty\vp\int_\Omega \rho_s^{-1}(\rho^\ga - \rho_s^\ga)
 \overline{(\rho^\te - \rho_s^\te)}dxdt
\ee
is difficult to be estimated directly
since it will change its sign which is  non-negative
in  \cite{PSW} where they only consider the case
that $\rho_s\equiv {\it constant}$.   To estimate \eqref{*},  we observe  that by Taylor expansion with remainder in integral form,
\be\ba\label{texp}
  \rho - \rho_s = &\frac{1}{\te} \rho_s^{1-\te}(\rho^\te - \rho_s^\te)
  \\&+ \frac{1}{\te}(\frac{1}{\te}-1)(\rho^\te - \rho_s^\te)^2
    \int_0^1(1-\tau)\left(\rho_s^\te+\tau(\rho^\te - \rho^\te_s)\right)^{\frac{1}{\te} - 2}d\tau,
\ea\ee which combined with the simple fact that $\overline{\rho-\rho_s}=0$ implies that \bnn\ba
 &\overline{ \rho_s^{1-\te}(\rho^\te - \rho_s^\te)}
  \\&=- (\frac{1}{\te}-1)\overline{(\rho^\te - \rho_s^\te)^2
    \int_0^1(1-\tau)\left(\rho_s^\te+\tau(\rho^\te - \rho^\te_s)\right)^{\frac{1}{\te} - 2}d\tau},
\ea\enn which   can be well controlled by
the product of the potential and a term with smallness
(cf. (\ref{x16}) and (\ref{x18})). In view of the above key observation, we succeed in constructing  a test function
$$
\varphi(t)\rho_s^{-1}\mathcal{B}(\rho_s^{1-\theta}([\rho^\theta]_{\ep}-[\rho_s^\theta]_{\ep})
-\overline{\rho_s^{1-\theta}([\rho^\theta]_{\ep}-[\rho_s^\theta]_{\ep})})
$$
 for $(\ref{1.1})_2$,  where  the mollified functions $[\rho^\te]_\ep$ is used to overcome
the lack of  regularity for $\rho^\te$. Then, carrying out the standard estimates, we build up  the desired estimate \eqref{3.9}
which is vital to get the decay estimate of the finite energy weak solutions.
Finally, observing that $(\rho^\theta -\rho_s^{\theta})^2$ can be bounded by $G(\rho, \rho_s)$, provided that $\theta $ is suitably small (see  \eqref{x2}),
we  construct a suitable Lyapunov functional and then finish the proof of Theorem \ref{t1}. It is worth noticing that property of $\rho_s$ (see \eqref{rho}),  which is ensured  by the assumption \eqref{mas},  plays a major impact on our proof.  Thus,  we make an additional discussion on \eqref{mas} in Section 4.

This paper is organized as follows.
In Section 2, we list a series of preliminary lemmas
which are used  in later arguments.
In Section 3, we concentrate on deducing some necessary estimates
and finally accomplishing Theorem \ref{t1}.
In Section 4, we provide  further discussion on assumption \eqref{mas}.
Throughout the paper, unless explicitly stated otherwise,
we denote by $C$  a positive generic constant depending
only on $\om,\ga,\mu,\lam,F,\ol{\n_0}$ and $E_0$, and $C$ may vary
in different cases. Also set $C(\alpha)$ to emphasize that
$C$ depends on the parameter $\alpha$.

\section{Preliminaries}
In this section, we recall  some elementary lemmas which are useful in our later discussion.

First, for $\eta$ as the standard mollifier in $\mathbb{R}^3$ and $f\in L_{loc}^1(\mathbb{R}^3)$, we set
$$
\eta_\ep(\cdot):=\frac{1}{\ep^3}\eta_\ep\left(\frac{\cdot}{\ep}\right), \quad [f]_{\ep}:=\eta_{\ep}*f.
$$

We start with the following standard properties of mollification.
\begin{lemma}    \la{lem2.1}{\rm(\cite{AF})}
Let $f$ be a function which is defined on $\mathbb{R}^3$ and vanishes identically outside a domain $\Omega\subset\mathbb{R}^3$. Then

{\rm(i)} \,\,if $f\in L^p(\Omega)$ with $1\leq p<\infty$, then $[f]_\ep\in L^p(\Omega)$.

{\rm(ii)}\,\,$\|[f]_\ep\|_{L^p(\Omega)}\leq\|f\|_{L^p(\Omega)},\,\, \displaystyle\lim_{\ep\rightarrow 0^+}\|[f]_\ep-f\|_{L^p(\Omega)}=0.$
\end{lemma}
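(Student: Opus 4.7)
The plan is to treat parts (i) and (ii) in the standard convolution framework. First I would write the mollification explicitly as
\[
[f]_\ep(x) = \int_{\mathbb{R}^3} \eta_\ep(x-y)\,f(y)\,dy,
\]
where $\eta_\ep \geq 0$, $\mathrm{supp}\,\eta_\ep \subset B_\ep(0)$, and $\int \eta_\ep = 1$. Measurability of $[f]_\ep$ follows from Fubini/Tonelli applied to the joint kernel $(x,y)\mapsto \eta_\ep(x-y)f(y)$. Since $f$ is extended by zero outside $\Omega$, so is the product, and $[f]_\ep$ is well-defined on all of $\mathbb{R}^3$. To obtain part (i) simultaneously with the norm bound in (ii), I would use the standard trick: write $\eta_\ep(x-y) = \eta_\ep(x-y)^{1/p'} \eta_\ep(x-y)^{1/p}$ and apply H\"older's inequality in $y$ to get
\[
|[f]_\ep(x)|^p \leq \left(\int \eta_\ep(x-y)\,dy\right)^{p/p'} \int \eta_\ep(x-y)|f(y)|^p\,dy = \int \eta_\ep(x-y)|f(y)|^p\,dy.
\]
Integrating in $x$ and invoking Fubini together with $\int \eta_\ep = 1$ yields $\|[f]_\ep\|_{L^p(\mathbb{R}^3)} \leq \|f\|_{L^p(\mathbb{R}^3)} = \|f\|_{L^p(\Omega)}$, which proves (i) and the norm inequality in (ii). (Alternatively one could simply invoke Young's convolution inequality with exponents $(1,p,p)$.)

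For the convergence statement in (ii), I would use the density of $C_c(\mathbb{R}^3)$ in $L^p(\mathbb{R}^3)$ for $1\leq p<\infty$. Given $\delta>0$, choose $g\in C_c(\mathbb{R}^3)$ with $\|f-g\|_{L^p}<\delta$. Then by the triangle inequality and the norm bound just proven,
\[
\|[f]_\ep - f\|_{L^p} \leq \|[f-g]_\ep\|_{L^p} + \|[g]_\ep - g\|_{L^p} + \|g - f\|_{L^p} \leq 2\delta + \|[g]_\ep - g\|_{L^p}.
\]
Since $g$ is uniformly continuous with compact support, a standard argument shows $[g]_\ep \to g$ uniformly and the supports of $[g]_\ep$ stay in a fixed compact set for small $\ep$, so $\|[g]_\ep - g\|_{L^p}\to 0$. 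Letting $\ep\to 0^+$ and then $\delta\to 0$ gives the claim.

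There is really no main obstacle here; this is a textbook result from Adams–Fournier and my plan is simply to reproduce the classical argument, with the only mild care being that since $f$ is extended by zero off $\Omega$, the $L^p$ norms over $\Omega$ and $\mathbb{R}^3$ agree, so the estimates transfer directly to the domain as stated in the lemma.
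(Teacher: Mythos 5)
Your proof is correct and is exactly the classical argument (H\"older/Young for the norm bound, density of $C_c$ plus uniform continuity for the convergence) that the paper implicitly relies on by citing Adams--Fournier; the paper gives no proof of its own. The one point you rightly flag --- that $f$ extended by zero makes the $L^p(\Omega)$ and $L^p(\mathbb{R}^3)$ norms agree, so the estimates restrict to $\Omega$ --- is handled adequately.
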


The  following commutator estimates, first derived by DiPerna-Lions \cite{DL}, will play a significant  role in   our further discussion.
\begin{lemma}  \la{lem2.3}{\rm(\cite{DL, F,L1})}
Let $\Omega\subset\mathbb{R}^3$ be a domain and $f\in L^p(\Omega)$,
 $v\in[W^{1,q}(\om)]^3$ be given functions with $1<p,q<\infty$ and $\frac{1}{p}+\frac{1}{q}\leq 1$.
 Then, for any compact subset $K\subset\Omega$,
\be\la{2-3}
\begin{cases}
\|[\div(fv)]_\ep-\div([f]_\ep v)\|_{L^r(K)}\leq C(K)\|f\|_{L^p(\Omega)}\|v\|_{W^{1,q}(\Omega)},\\
 \|[\div(fv)]_\ep-\div([f]_\ep v)\|_{L^r(K)}\rightarrow 0, \ \ as\ \ep\rightarrow 0,
\end{cases}
\ee
provided $\ep>0$ is small enough and $\frac{1}{r}=\frac{1}{p}+\frac{1}{q}$. In addition, if $\Omega=\mathbb{R}^3$, $K$ can be replaced by $\mathbb{R}^3$.
\end{lemma}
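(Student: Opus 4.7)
The plan follows the classical DiPerna--Lions argument: I would express $r_\ep(f,v) := [\div(fv)]_\ep - \div([f]_\ep v)$ as an explicit integral, deduce a uniform-in-$\ep$ bound by Minkowski and H\"{o}lder, and then pass to the limit $\ep\to 0$ via density of smooth functions. Because mollification commutes with distributional derivatives, one has $[\div(fv)]_\ep = \div_x [fv]_\ep$, and integration by parts in $y$ gives $[\div(fv)]_\ep(x) = \int \na_x\eta_\ep(x-y)\cdot v(y)f(y)\,dy$. Splitting $v(y) = v(x) + (v(y)-v(x))$ and using $v(x)\cdot\na [f]_\ep(x) = \div([f]_\ep v)(x) - [f]_\ep(x)\div v(x)$ yields the key pointwise identity
\be
r_\ep(f,v)(x) = \int \na_x \eta_\ep(x-y)\cdot(v(y)-v(x)) f(y)\, dy - [f]_\ep(x)\,\div v(x),
\ee
valid whenever $\operatorname{dist}(x,\p\om) > \ep$.

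For the uniform bound, H\"{o}lder immediately controls $\|[f]_\ep \div v\|_{L^r(K)} \le \|f\|_{L^p(\om)}\|\div v\|_{L^q(\om)}$. For the first term I would substitute $y = x + \ep z$ and invoke the identity $v(x+\ep z) - v(x) = \ep\int_0^1 z\cdot\na v(x + s\ep z)\,ds$ (valid for smooth $v$, extended to $v\in W^{1,q}$ by density), so that the integrand is dominated by $|z||\na\eta(-z)|\int_0^1 |\na v(x+s\ep z)||f(x+\ep z)|\,ds$ on the ball $|z|\le 1$. Applying Minkowski's integral inequality in $(z,s)$ and then H\"{o}lder in $x$ (for $\ep$ small enough that $K + B(0,\ep)\subset \om$) produces the announced bound $\|r_\ep(f,v)\|_{L^r(K)} \le C(K)\|f\|_{L^p(\om)}\|v\|_{W^{1,q}(\om)}$.

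Convergence to zero proceeds in two moves. First, for smooth $(\tilde f,\tilde v)\in C_c^\infty(\om)\times C_c^\infty(\om)^3$, the difference quotient $\ep^{-1}(\tilde v(x+\ep z)-\tilde v(x))$ converges uniformly to $\na\tilde v(x)z$, so the integrand in the first term of $r_\ep(\tilde f,\tilde v)$ converges to $\na\eta(-z)\cdot(\na\tilde v(x)z)\tilde f(x)$; a direct integration by parts on $\eta$ evaluates $\int \na\eta(-z)\cdot(\na\tilde v(x)z)\,dz = \div\tilde v(x)$, which exactly cancels the uniform limit of $[\tilde f]_\ep\div\tilde v$. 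Dominated convergence then gives $r_\ep(\tilde f,\tilde v)\to 0$ in $L^r(K)$. Second, for general $(f,v)$ I would approximate by smooth $(\tilde f,\tilde v)$ with $\|f-\tilde f\|_{L^p(\om)}+\|v-\tilde v\|_{W^{1,q}(\om)}<\delta$, exploit the linearity of $r_\ep$ in $(f,v)$ together with the uniform bound, and conclude $\limsup_{\ep\to 0}\|r_\ep(f,v)\|_{L^r(K)}\le C\delta$, which yields the second line of \eqref{2-3} since $\delta$ is arbitrary. When $\om=\mathbb{R}^3$, all translates stay in the domain, so $K$ may be replaced by $\mathbb{R}^3$ with no extra work.

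The main obstacle is the uniform bound in the second paragraph: the estimate must be simultaneously linear in $f$ and in $v$ with the stated norms, and the correct order of inequalities is Minkowski first (to move the $L^r(K)$ norm past the $z$- and $s$-integrals) and H\"{o}lder second (to factor $\|\na v\|_{L^q(\om)}\|f\|_{L^p(\om)}$). Any attempt to apply H\"{o}lder directly in $x$ at the outset loses the symmetric bilinear structure, so the Fubini/Minkowski rearrangement is essential. Once this bound is in place, the convergence assertion follows automatically from the density of $C_c^\infty$ in $L^p$ and $W^{1,q}$.
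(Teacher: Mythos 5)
The paper does not prove this lemma; it is quoted as a known commutator estimate with citations to DiPerna--Lions, Feireisl and Lions, so there is no in-paper argument to compare against. Your proof is the standard DiPerna--Lions argument and is correct: the pointwise identity for $r_\ep$, the Minkowski-then-H\"older order for the uniform bilinear bound, the cancellation $\int \na\eta(-z)\cdot(\na \tilde v(x)z)\,dz = \div\tilde v(x)$, and the density step are all as in the cited references. The only (cosmetic) slip is taking $\tilde v\in C_c^\infty(\Omega)^3$: for a general domain $C_c^\infty(\Omega)$ is not dense in $W^{1,q}(\Omega)$, so you should approximate $v$ by elements of $C^\infty(\Omega)\cap W^{1,q}(\Omega)$ (Meyers--Serrin), which suffices since the estimate is only needed on compact subsets $K\subset\Omega$.
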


As a direct consequence of Lemma \ref{lem2.3}, the following result can be found in \cite{L2}.
\begin{corollary} \label{rensol}{\rm(\cite{L2})}
  Let $(\rho,u)$ be the solution of the problem \eqref{1.1}--\eqref{1.4}
  as in Lemma \ref{lem3}.
  Then, prolonging  $(\rho,u)$ to zero in $\mathbb{R}^3\backslash\Omega$,
  for $0<\theta\leq\frac{\gamma}{2}$ and any $[\alpha,\beta]\subset (0,\infty)$, we have
\be\la{2.3}\ba
  \pa_t([\rho^\te]_\ep)+ \div([\rho^\te]_\ep u)
  =(1-\te)[\rho^\te\div u]_\ep + r_\ep,
\ea\ee
almost everywhere on $[\alpha,\beta]$,
 where
 $$r_\ep:= \div([\rho^\te]_{\ep}u) - [\div(\rho^\te u)]_\ep.$$
 Moreover,
\be\ba\label{x35}
  r_\ep\to 0\quad \mathrm{in} \ L^2(\alpha,\beta;L^q(\om)),
\ea\ee
for any $q\in[1,\frac{2\ga}{\ga+2\te}]$.
\end{corollary}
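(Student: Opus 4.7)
The plan is to combine the renormalization property of the continuity equation for Lions-Feireisl solutions with the DiPerna-Lions commutator estimate (Lemma \ref{lem2.3}) in a mollification step. The identity is obtained by mollifying a renormalized form of $\eqref{1.1}_1$, and the convergence $r_\ep\to 0$ is read off from Lemma \ref{lem2.3} and upgraded to $L^2$ in time by Lebesgue's dominated convergence theorem.

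First, extending $(\n,u)$ by zero outside $\Omega$, I would establish the renormalized identity
\begin{equation*}
\pa_t \n^\te + \div(\n^\te u) + (\te-1)\n^\te\div u = 0
\end{equation*}
in $\mathcal{D}'((0,\infty)\times\mathbb{R}^3)$. Since the admissible class in the definition of finite energy weak solutions requires $b'$ to vanish for large argument, $b(z)=z^\te$ is not directly allowed, so I would approximate it by truncations $b_M\in C^1(\mathbb{R})$ with $b_M(z)=z^\te$ for $0\le z\le M$, $b_M'(z)=0$ for $z\ge 2M$, and $|b_M(z)|\le C z^\te$ uniformly in $M$. For each $M$ the identity \eqref{1.17} applies to $b_M$; the hypothesis $\te\le \ga/2$ combined with $\n\in L^\infty_tL^\ga_x$ and $\div u\in L^2_{t,x}$ makes $\n^\te\div u\in L^2_tL^r_x$ with $1/r=\te/\ga+1/2\le 1$, so dominated convergence lets me pass $M\to\infty$, controlling in particular the defect term $\n b_M'(\n)\div u$ supported on $\{\n\ge M\}$.

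Next, I would convolve this renormalized identity with $\eta_\ep$; since $\eta_\ep$ commutes with $\pa_t$ and $\div$,
\begin{equation*}
\pa_t[\n^\te]_\ep + [\div(\n^\te u)]_\ep + (\te-1)[\n^\te\div u]_\ep = 0,
\end{equation*}
and adding and subtracting $\div([\n^\te]_\ep u)$ recasts this as \eqref{2.3} with the stated $r_\ep$, pointwise a.e.\ on $[\alpha,\beta]$.

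For the convergence claim, I would apply Lemma \ref{lem2.3} with $f=\n^\te(\cdot,t)$, $v=u(\cdot,t)$, $p=\ga/\te$ and $q=2$; the compatibility condition $1/p+1/q\le 1$ reduces exactly to $\te\le \ga/2$, which is the hypothesis. Setting $r=2\ga/(\ga+2\te)$, Lemma \ref{lem2.3} yields both the pointwise-in-time convergence $\|r_\ep(\cdot,t)\|_{L^r(\Omega)}\to 0$ and the uniform bound
\begin{equation*}
\|r_\ep(\cdot,t)\|_{L^r(\Omega)}\le C\|\n(\cdot,t)\|_{L^\ga(\Omega)}^\te\|u(\cdot,t)\|_{H^1(\Omega)}.
\end{equation*}
Squaring and integrating over $[\alpha,\beta]$, the right-hand side is finite since $\n\in L^\infty_tL^\ga_x$ and $u\in L^2(\alpha,\beta;H^1_0(\om))$, so dominated convergence upgrades the pointwise-in-time convergence to $r_\ep\to 0$ in $L^2(\alpha,\beta;L^r(\Omega))$. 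Since $\Omega$ is bounded, $L^r(\Omega)\hookrightarrow L^q(\Omega)$ for every $q\in[1,r]$, which is precisely the claimed range \eqref{x35}.

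The main obstacle is the renormalization step, because $b(z)=z^\te$ is not directly admissible in the definition of finite energy weak solutions. Establishing the limit identity requires that the truncation defect $\n b_M'(\n)\div u\mathbf{1}_{\{\n\ge M\}}$ vanishes in $L^1$ as $M\to\infty$, and this is exactly where the threshold $\te\le\ga/2$ is consumed: it is the sharp bound making $\n^\te\div u$ integrable under only the energy estimates. After that, the mollification rearrangement and the final commutator step are a routine application of Lemma \ref{lem2.3}.
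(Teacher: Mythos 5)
Your argument is correct and follows exactly the route the paper itself indicates (the paper gives no proof, merely citing \cite{L2} and observing that the statement is a direct consequence of Lemma \ref{lem2.3}): renormalize the continuity equation with $b(z)=z^{\theta}$ by truncation, mollify, and read off the convergence of $r_\varepsilon$ from the commutator lemma with $f=\rho^{\theta}$, $p=\gamma/\theta$, $q=2$, so that $1/p+1/q\le 1$ is precisely $\theta\le\gamma/2$ and $r=2\gamma/(\gamma+2\theta)$, the time integral being handled by dominated convergence. One small repair is needed in the renormalization step: when $\theta<1$ --- which is the regime the paper actually uses, since its $\theta\le 1/2$ --- no function with $b_M(z)=z^{\theta}$ on $[0,M]$ can belong to $C^1(\mathbb{R})$, because $b_M'(0^+)=+\infty$; so besides the truncation at large $z$ you also need a regularization near $z=0$, e.g. $b_{M,\delta}(z)=(z+\delta)^{\theta}-\delta^{\theta}$ suitably cut off. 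The additional defect term this produces is supported on $\{\rho\le\delta\}$ and bounded by $C\delta^{\theta}\lvert \div u\rvert$, hence vanishes as $\delta\to 0$, and the rest of your argument, including the equi-integrability control of the defect on $\{\rho\ge M\}$ via $\rho\in L^\infty_tL^\gamma_x$, goes through unchanged.
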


We now introduce  the well-known Bogovskii operator,
which was first constructed by the Bogovskii \cite{B},  and state some related properties.
\begin{lemma}   \la{lem2.5}{\rm(\cite{B,FNP,G})}
Let $\Omega$ be a bounded Lipschitz domain in $\mathbb{R}^3$ and consider an auxiliary problem
\be \la{2.4}
\div v=f, \ \ v|_{\partial \Omega}=0.
\ee
Then,
there exists a linear operator
$\mathcal{B}=[\mathcal{B}_1,\mathcal{B}_2,\mathcal{B}_3]$
enjoying the properties:

$\bullet$ $\mathcal{B}$ is a bounded linear operator from $\{f\in L^p(\Omega)|\int_{\Omega}fdx=0\}$ into $[W^{1,p}_0(\Omega)]^3$, that is,
\be\ba\label{bog1}
\|\mathcal{B}[f]\|_{W^{1,p}_0(\Omega)}
\leq C(p,\Omega)\|f\|_{L^p(\Omega)}, \ for\ any\ p\in(1, \infty).
\ea\ee

$\bullet$ The function $v=\mathcal{B}[f]$ solves the problem \eqref{2.4}.

$\bullet$ If, moreover, $f$ can be written in the form $f=\div g$ with $g\in [L^r(\Omega)]^3$,
 $g\cdot \vec{n}|_{\partial\Omega}=0$, then
\be\ba\label{bog2}
\|\mathcal{B}[f]\|_{L^r(\Omega)}\leq C(r,\Omega)\|g\|_{L^r(\Omega)},
\ for \ any\ r\in(1, \infty).
\ea\ee
\end{lemma}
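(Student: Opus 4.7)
The plan is to follow the classical Bogovski\u\i\ construction, which proceeds in three stages: an explicit integral representation on star-shaped domains, a Calder\'on--Zygmund type boundedness argument, and a patching procedure to pass to general Lipschitz domains.

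First, I would treat the case where $\Omega$ is star-shaped with respect to some ball $B\subset\Omega$. Fix a smooth cutoff $\omega\in C_c^\infty(B)$ with $\int_{B}\omega\,dy=1$ and define
\[
\mathcal{B}[f](x):=\int_{\Omega}f(y)\,\frac{x-y}{|x-y|^{3}}\int_{|x-y|}^{\infty}\omega\!\left(y+r\,\tfrac{x-y}{|x-y|}\right)r^{2}\,dr\,dy.
\]
A direct change of variables shows that $\mathcal{B}[f]$ is well defined, that $\mathcal{B}[f]\equiv 0$ near $\partial\Omega$ (because the inner integrand forces the evaluation point to leave the support of $\omega$), and that $\div \mathcal{B}[f]=f-\omega\int_{\Omega}f\,dy$; under the zero-mean assumption $\int_{\Omega}f\,dy=0$ this yields $\div\mathcal{B}[f]=f$ with $\mathcal{B}[f]\in [W^{1,p}_{0}(\Omega)]^{3}$.

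Second, differentiating the explicit formula one finds that the derivatives $\partial_{i}\mathcal{B}_{j}[f]$ decompose as the sum of a Calder\'on--Zygmund singular integral operator applied to $f$ plus a smoothing remainder with uniformly bounded kernel. The $L^{p}$-boundedness $1<p<\infty$ of such singular integrals, combined with Poincar\'e's inequality to upgrade to a $W^{1,p}_{0}$ bound, gives the first estimate $\|\mathcal{B}[f]\|_{W^{1,p}_{0}(\Omega)}\le C(p,\Omega)\|f\|_{L^{p}(\Omega)}$. For the second, more delicate estimate, when $f=\div g$ with $g\cdot\vec{n}|_{\partial\Omega}=0$, I would integrate by parts in $y$ inside the defining integral to transfer the divergence onto the kernel. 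The resulting operator is again of Calder\'on--Zygmund type applied directly to the components of $g$, so $L^{r}$-boundedness of singular integrals yields $\|\mathcal{B}[f]\|_{L^{r}(\Omega)}\le C(r,\Omega)\|g\|_{L^{r}(\Omega)}$; the vanishing normal trace of $g$ is precisely what kills the boundary terms arising from the integration by parts.

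Finally, for a general bounded Lipschitz domain, I would use the standard fact that $\Omega$ admits a finite decomposition $\Omega=\bigcup_{j=1}^{N}\Omega_{j}$ with each $\Omega_{j}$ star-shaped with respect to a ball. Given $f\in L^{p}(\Omega)$ with $\int_{\Omega}f\,dx=0$, I would construct, via a partition of unity and an inductive mean-adjustment, functions $f_{j}\in L^{p}(\Omega_{j})$ such that $f=\sum_{j}f_{j}$, each $f_{j}$ is supported in $\Omega_{j}$ with $\int_{\Omega_{j}}f_{j}\,dx=0$, and $\|f_{j}\|_{L^{p}(\Omega_{j})}\le C\|f\|_{L^{p}(\Omega)}$; summing the star-shaped operators applied to each $f_{j}$ (extended by zero) and verifying that the adjustments inherit the divergence structure in the second estimate delivers the operator $\mathcal{B}$ with both bounds.

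The main technical obstacle is exactly this last patching step, in particular the inductive redistribution that produces zero-mean pieces $f_{j}$ supported in star-shaped subdomains while keeping the second estimate (where $f=\div g$ with vanishing normal trace) intact: one must show that the auxiliary corrections used to enforce zero means on the $\Omega_{j}$ can themselves be written as divergences of $L^{r}$ fields of comparable norm, otherwise the Calder\'on--Zygmund bound in the star-shaped setting cannot be assembled into a global $L^{r}$ bound.
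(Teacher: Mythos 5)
The paper offers no proof of this lemma---it is quoted verbatim from the cited references \cite{B,FNP,G}---and your outline reproduces exactly the standard construction found there: the explicit Bogovski\u{\i} kernel on domains star-shaped with respect to a ball, Calder\'on--Zygmund estimates for the gradient (and, after integrating by parts using $g\cdot\vec{n}|_{\partial\Omega}=0$, for the negative-order bound \eqref{bog2}), followed by the decomposition of a bounded Lipschitz domain into finitely many star-shaped pieces with a zero-mean splitting of $f$. The sketch is correct in its essentials, and you rightly single out the patching step for the second estimate as the only genuinely delicate point; the one minor imprecision is that $\mathcal{B}[f]$ vanishes near $\partial\Omega$ only for compactly supported $f$, with membership in $[W^{1,p}_0(\Omega)]^3$ for general $f\in L^p$ obtained by density.
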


For system (\ref{1.8}), the   existence and uniqueness of solutions  were originally established by Matsumura-Padula \cite{MP}.
\begin{lemma}   \la{lem1}{\rm(\cite{MP})}
Let $F \in W^{1,q}(\Omega)$,  for $3 < q \leq \infty$ and suppose that \eqref{mas} holds. Then problem \eqref{1.8} has a unique solution $(\rho_s,0)$,
with $\rho_s\in  W^{1,q}(\om)$ and $\displaystyle \inf_\om \n_s >0$.
\end{lemma}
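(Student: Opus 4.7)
The plan is to reduce the steady system to a single scalar constraint by first trying the ansatz $u_s\equiv 0$, solve that constraint explicitly, then show any candidate solution must have $u_s\equiv 0$ so that uniqueness of the density follows from uniqueness of the Lagrange multiplier in the mass constraint.

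First, if $u_s\equiv 0$ the continuity equation is trivial and $\eqref{1.8}_2$ collapses to $\na P(\n_s)=\n_s\na F$. Since $P(\n)=\n^\ga$, in any open set where $\n_s>0$ this is equivalent to
$$\na\!\left(\frac{\ga}{\ga-1}\n_s^{\ga-1}-F\right)=0,$$
so there is a constant $K$ with $\n_s=\bigl(\tfrac{\ga-1}{\ga}(F-K)\bigr)^{1/(\ga-1)}$, and $\inf_\om \n_s>0$ iff $K<\inf_\om F$. I would then define, for $K<\inf_\om F$,
$$\Phi(K):=\int_\om \left(\frac{\ga-1}{\ga}(F-K)\right)^{\frac{1}{\ga-1}}dx,$$
which is continuous, strictly decreasing, blows up as $K\to-\infty$, and has the limit $\int_\om\bigl(\tfrac{\ga-1}{\ga}(F-\inf_\om F)\bigr)^{1/(\ga-1)}dx$ as $K\to(\inf_\om F)^-$. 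Assumption \eqref{mas} is exactly the statement that this limit is strictly less than $\int_\om\n_0\,dx$, so the intermediate value theorem produces a unique $K<\inf_\om F$ with $\Phi(K)=\int_\om\n_0\,dx$. This yields existence of $(\n_s,0)$ with $\inf_\om\n_s=\bigl(\tfrac{\ga-1}{\ga}(\inf_\om F-K)\bigr)^{1/(\ga-1)}>0$. For regularity, using $q>3$ and the Sobolev embedding $W^{1,q}(\om)\hookrightarrow C(\ol\om)$, the representation formula gives $\n_s$ continuous and bounded away from $0$, and differentiation yields $\na\n_s=\frac{1}{\ga}\n_s^{2-\ga}\na F\in L^q(\om)$, hence $\n_s\in W^{1,q}(\om)$.

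For uniqueness, let $(\n_s,u_s)$ be any solution with $\n_s>0$. Rewriting the momentum equation as
$$-\mu\Delta u_s-(\lam+\mu)\na\div u_s+\n_s\na\!\left(\frac{\ga}{\ga-1}\n_s^{\ga-1}-F\right)=0,$$
I would test against $u_s\in H^1_0(\om)$ and integrate by parts: using $u_s|_{\pa\om}=0$ together with $\div(\n_s u_s)=0$, the potential-type term becomes
$$\int_\om \n_s u_s\cdot\na\!\left(\tfrac{\ga}{\ga-1}\n_s^{\ga-1}-F\right)dx=-\int_\om\!\left(\tfrac{\ga}{\ga-1}\n_s^{\ga-1}-F\right)\div(\n_s u_s)\,dx=0,$$
leaving $\mu\|\na u_s\|_{L^2}^2+(\lam+\mu)\|\div u_s\|_{L^2}^2=0$. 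By \eqref{1.3} and Poincar\'e's inequality this forces $u_s\equiv 0$, and then the preceding scalar analysis identifies $\n_s$ uniquely through $K$.

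The main obstacle I anticipate is justifying the integration-by-parts step in the uniqueness argument at the level of regularity actually available a priori for $(\n_s,u_s)$: one needs $\n_s u_s$ to be admissible as a test function against $\na(\n_s^{\ga-1}-\text{const})$. Once $u_s\in H^1_0$ and $\n_s\in L^\infty$ with $\n_s^{\ga-1}\in W^{1,q}$ are in hand (which follow from a bootstrap using the elliptic structure of the Lam\'e operator in $\eqref{1.8}_2$ together with the Bogovskii estimates of Lemma \ref{lem2.5} and $F\in W^{1,q}$), these manipulations become legitimate; this is the part where I would follow the arguments of Matsumura--Padula \cite{MP} most closely.
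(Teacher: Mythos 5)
The paper offers no proof of this lemma --- it is quoted from Matsumura--Padula \cite{MP} --- so your argument must stand on its own. The existence half does: reducing $\eqref{1.8}_2$ with $u_s\equiv 0$ to $\frac{\ga}{\ga-1}\rho_s^{\ga-1}=F-K$, noting that $\Phi$ is continuous and strictly decreasing with $\Phi(K)\to+\infty$ as $K\to-\infty$ and $\Phi(K)\to\int_\om\big(\frac{\ga-1}{\ga}(F-\inf_\om F)\big)^{1/(\ga-1)}dx$ as $K\to(\inf_\om F)^-$, and using \eqref{mas} to place $\int_\om\rho_0\,dx$ strictly above that limit, is the standard construction; the regularity computation $\na\rho_s=\frac{1}{\ga}\rho_s^{2-\ga}\na F\in L^q(\om)$ is legitimate because $\rho_s$ is bounded above and below away from zero and $F\in W^{1,q}(\om)\hookrightarrow C(\overline\om)$ for $q>3$. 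The energy argument for $u_s\equiv 0$ is also correct in substance (when $\lam+\mu<0$ one uses $\|\div u_s\|_{L^2}\le\|\na u_s\|_{L^2}$ for $u_s\in H^1_0$ to get $\mu\|\na u_s\|^2+(\lam+\mu)\|\div u_s\|^2\ge(2\mu+\lam)\|\na u_s\|^2$ with $2\mu+\lam\ge\frac43\mu>0$ by \eqref{1.3}); the cleanest way to justify the pressure term at low regularity is the renormalized continuity equation with $b(s)=s^\ga$, which yields $\int_\om\rho_s^\ga\div u_s\,dx=0$ without ever forming $\rho_s^{\ga-1}$.

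The genuine gap is the final sentence of your uniqueness step, ``the preceding scalar analysis identifies $\rho_s$ uniquely through $K$.'' That analysis only parametrizes solutions that are strictly positive on all of $\om$, for which a single constant $K$ works on the connected domain. A competing solution of $\na P(\rho_s)=\rho_s\na F$ with the same total mass could a priori vanish on part of $\om$ and equal $\big(\frac{\ga-1}{\ga}(F-K_i)\big)_+^{1/(\ga-1)}$ with \emph{different} constants $K_i$ on different connected components of its positivity set; on the vacuum set the equation carries no information, and nothing in your argument excludes such configurations. This is not a pedantic point: it is precisely the mechanism behind the continuum of solutions in Lemma \ref{lem4}(ii), and full uniqueness --- not merely uniqueness among everywhere-positive densities --- is what Remark \ref{rmk2.1} and Lemma \ref{lem2} require, since the large-time limit produced by the Feireisl--Petzeltov\'a argument is a priori only a nonnegative $L^\ga$ solution of \eqref{1.9}. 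To close the gap you must verify $m=\int_\om\rho_0\,dx\ge\hat m$ and invoke Lemma \ref{lem4}(i) (or reproduce its proof): in the notation of Section 4, \eqref{mas} gives $m>S(\inf_\om F)\ge\hat m$ because $\hat K\ge\inf_\om F$ and $S$ is decreasing, so uniqueness among all $L^\infty_{\mathrm{loc}}$ solutions does follow --- but this step must be made explicit rather than inferred from the positive-density case.
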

Finally, by making slightly modification  to Feireisl-Petzeltov\'a \cite{FP},  we state the following large time behavior
of finite energy weak solutions.
\begin{lemma} \label{lem2}
   Assume  that $\Omega\subset\mathbb{R}^3$  is a bounded domain with a compact and Lipschitz boundary. Let $F$ be Lipschitz continuous on $\overline{\Omega}$ and
    satisfy \eqref{mas}.
     Then, for any finite energy weak solution $(\rho, u)$ to the problem \eqref{1.1}--\eqref{1.4},
      there exists a stationary state $\rho_s$ such that
    \be\la{2.31}
    \rho(t)\rightarrow \rho_s\ strongly \ in \ L^\gamma(\Omega),\ \sup_{\tau>t}\int_\Omega \rho(\tau)|u(\tau)|^2dx\rightarrow 0,
    \ as\ t\rightarrow \infty.
    \ee
    Moreover, the  equilibrium state $(\rho_s, 0)$ is the unique solution to the system \eqref{1.8}.
    \end{lemma}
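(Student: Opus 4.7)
The plan is to follow the standard $\omega$-limit set argument pioneered by Feireisl-Petzeltov\'a, adapting it to handle the potential external force $F$. The starting point is the energy inequality \eqref{engyinq}: testing it against a non-negative $\varphi\in\mathcal{D}(0,\infty)$ and invoking mass conservation together with $F\in W^{1,\infty}(\Omega)$, one obtains the global-in-time bounds
\be\nonumber
\sup_{t\geq 0}\int_\Omega\left(\tfrac{1}{2}\rho|u|^2+\tfrac{1}{\gamma-1}\rho^\gamma\right)dx \leq C,\qquad \int_0^\infty\int_\Omega|\nabla u|^2\,dx\,dt<\infty.
\ee
In particular $\int_t^{t+1}\|u(s)\|_{H^1_0}^2\,ds\to 0$ as $t\to\infty$, which is the driving mechanism of the analysis.

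Given an arbitrary sequence $t_n\to\infty$, I would consider the time-shifted pair $(\rho_n,u_n)(t,x):=(\rho,u)(t+t_n,x)$ on $(-1,1)\times\Omega$. Each $(\rho_n,u_n)$ remains a finite energy weak solution satisfying the renormalized continuity equation \eqref{1.17}, with uniform bounds in $L^\infty_t L^\gamma_x \cap L^{\gamma+\theta_0}_{t,x}$ and $L^2_t H^1_x$. After extracting a weakly convergent subsequence to a limit $(\tilde\rho,\tilde u)$, the vanishing of the dissipation on the shifted window forces $\tilde u\equiv 0$. The main technical obstacle is upgrading $\rho_n\rightharpoonup\tilde\rho$ to \emph{strong} convergence in $L^1((-1,1)\times\Omega)$; this is handled via the Lions-Feireisl compactness machinery (the effective viscous flux identity combined with the renormalized continuity equation applied to a suitable convex $b$), which is tractable here precisely because of the extra integrability $\rho\in L^{\gamma+\theta_0}$ guaranteed by Lemma \ref{lem3}. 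Without this step one could not identify the weak-$*$ limit of $\rho_n^\gamma$ with $\tilde\rho^\gamma$.

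Once the strong convergence is secured, passing to the limit in the continuity and momentum equations yields $\partial_t\tilde\rho=0$ (so $\tilde\rho$ is time-independent) together with $\nabla\tilde\rho^\gamma=\tilde\rho\nabla F$ in $\mathcal{D}'(\Omega)$, so $(\tilde\rho,0)$ solves \eqref{1.8}. Conservation of mass passes to the limit to give $\int_\Omega\tilde\rho\,dx=\int_\Omega\rho_0\,dx$, and the uniqueness statement of Lemma \ref{lem1} (whose hypothesis is exactly \eqref{mas}) then identifies $\tilde\rho=\rho_s$. Since the limit is independent of the extracted subsequence, the entire trajectory satisfies $\rho(t)\to\rho_s$ strongly in $L^\gamma(\Omega)$.

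For the kinetic-energy statement $\sup_{\tau>t}\int_\Omega\rho(\tau)|u(\tau)|^2dx\to 0$, I would plug into \eqref{engyinq} a $\varphi$ approximating the characteristic function of an interval $[t,s]\subset(0,\infty)$ and let $s\to\infty$: combining the already-established convergence of the potential/internal energy $\int_\Omega\bigl(\tfrac{1}{\gamma-1}\rho^\gamma-\rho F\bigr)dx$ to its unique equilibrium value with $\int_t^\infty\int_\Omega|\nabla u|^2\,dx\,dt\to 0$ forces $\int_\Omega\tfrac{1}{2}\rho|u|^2dx$ to tend uniformly to zero on $[t,\infty)$, which is the required uniform decay.
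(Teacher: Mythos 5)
Your proposal is correct and follows essentially the same route as the paper, which itself gives no proof but simply invokes Feireisl--Petzeltov\'a \cite{FP} for the $\omega$-limit set argument (time-shift, uniform energy/dissipation bounds, vanishing dissipation forcing $\tilde u\equiv0$, Lions--Feireisl compactness via the effective viscous flux and renormalized continuity equation to upgrade to strong $L^1$ convergence of the density, identification of the limit as a solution of \eqref{1.8}) and then, as explained in Remark \ref{rmk2.1}, replaces the connectedness hypothesis on the level sets of $F$ used in \cite{FP} by condition \eqref{mas}, which by the Matsumura--Padula result (Lemma \ref{lem1}) already guarantees uniqueness of $\rho_s$. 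You have spelled out precisely the steps of the \cite{FP} machinery and made the same substitution at the uniqueness step, so the two arguments coincide.
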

\begin{remark}\la{rmk2.1}
  In \cite{FP},  $F$   is originally imposed the condition that
  any upper level set
  $ \{x\in\Omega|F(x)>k\}$
  is connected in $\Omega$ for any $k<\displaystyle \sup_{x\in\Omega}F(x)$,
  which ensures  the uniqueness of the system \eqref{1.8}.
  However, in view of  Lemma
  \ref{lem1}, the uniqueness result can be replaced  by the condition \eqref{mas}.
  Thus, we replace the condition by \eqref{mas} in Lemma \ref{lem2}.
\end{remark}

\section{\la{se3}Proof of Theorem \ref{t1}}

First, recalling that $\nabla P(\rho_s)=\rho_s\nabla F$ and by direct calculations, we have
$$
\int_0^\infty \varphi\int_\Omega \rho u \cdot \nabla Fdxdt=\int_0^\infty \varphi_t\int_\Omega\left(\rho_s^\gamma-\frac{\gamma}{\gamma-1}\rho \rho_s^{\gamma-1}\right)dxdt,
$$
which together with \eqref{engyinq} gives
\be\la{x1}
-\int_0^\infty\varphi_t\int_{\Omega}\bigg(\frac{1}{2}\rho|u|^2+G(\rho,\rho_s)\bigg)dxdt+\int_0^\infty\varphi\int\bigg(\mu|\nabla u|^2+(\lam+\mu)(\div{u})^2\bigg)dxdt
\leq 0.
\ee
Moreover, it implies that
\be\ba\la{E11}
\int_\om \left(G(\n,\n_s)+\frac{1}{2}\n|u|^2\right)dx + \int_0^t\int_\Omega [\mu |\na u|^2+(\lambda +\mu)(\div u)^2]dxds
\leq E_0,\ea\ee
for almost every $t\geq 0$.

Next, taking
$\te: = \min\{\frac{2}{3}\ga-1,\frac{1}{2},\frac{\ga}{6}\}$,
 there exists a positive constant $C_0$, depending only on
$\om,\ \gamma,\ F$ and $\ol{\n_0}$, such that, for any $\rho\geq 0$,

\be\la{x2}
C_0^2(\rho^\theta-\rho_s^\theta)^2\leq C_0G(\rho,\rho_s)\leq \rho_s^{-\theta}(\rho^\gamma-\rho_s^\gamma)(\rho^\theta-\rho_s^\theta).
\ee
Then, we claim  the following essential estimate whose proof will be left in the later discussion.

 \begin{pro}\label{3.1}
 There exist positive constants $C$ and $T_0$,
such that, for any $0\leq \varphi(t)\in \mathcal{D}(0,\infty)$ with
supp $\varphi \subset (T_0, \infty)$,
\be\ba\la{3.9}
&
  \frac{C_0}{2}\int_0^\infty \varphi\int_\Omega G(\rho,\rho_s)dxdt\\
&\quad
  +\int_0^\infty \varphi_t \int_\Omega\rho u\cdot\rho_s^{-1}
  \mathcal{B}\left(\rho_s^{1-\te}(\rho^\theta-\rho_s^\theta)
  -\overline{\rho_s^{1-\te}(\rho^\te-\rho_s^\te)}\right)dxdt\\
&
  \leq C\int_0^\infty\varphi\|\nabla u\|^2_{L^2(\Omega)}dt,
\ea\ee
where $C$
depends only on $\om,\ \ga,\ \mu,\ \lam,\ F,\  E_0$ and$\ \ol{\n_0}$,
while $T_0$ depends on
$\om,\ \ga,\ F$ and specially, on  $\n$ and $ u$.
\end{pro}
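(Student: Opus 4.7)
The plan is to test the weak form of $(\ref{1.1})_2$ against $\varphi(t)\psi_\ep$ with
\[
\psi_\ep := \rho_s^{-1}\mathcal{B}\bigl(\Phi_\ep-\overline{\Phi_\ep}\bigr), \qquad \Phi_\ep := \rho_s^{1-\te}\bigl([\rho^\te]_\ep-[\rho_s^\te]_\ep\bigr),
\]
and then to pass $\ep\to 0^+$. The mollification is what makes $\pa_t\Phi_\ep$ meaningful through the renormalized continuity equation (\ref{2.3}); the factor $\rho_s^{-1}\in W^{1,\infty}(\om)$ is justified by (\ref{rho}), and $\psi_\ep\in L^\infty(0,\infty;H^1_0(\om))$ by the Bogovskii bound (\ref{bog1}).

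First I would recast the pressure--force pair via the algebraic identity
\[
\rho_s^{-1}(\na\rho^\ga-\rho\na F) = \na\bigl(\rho_s^{-1}(\rho^\ga-\rho_s^\ga)\bigr) - (\ga-1)G(\rho,\rho_s)\na\rho_s^{-1}
\]
already recorded in the introduction. Pairing with $\mathcal{B}(\Phi_\ep-\overline{\Phi_\ep})$ and integrating by parts produces the principal contribution
\[
-\int_0^\infty\!\!\varphi\!\int_\om\rho_s^{-1}(\rho^\ga-\rho_s^\ga)\bigl(\Phi_\ep-\overline{\Phi_\ep}\bigr)\,dx\,dt,
\]
plus a remainder of the form $(\ga-1)\int\!\varphi\!\int G(\rho,\rho_s)\na\rho_s^{-1}\cdot\mathcal{B}(\Phi_\ep-\overline{\Phi_\ep})\,dx\,dt$ which, since $\na\rho_s^{-1}\in L^\infty$, is bounded by $C\|G(\rho,\rho_s)\|_{L^1}\|\mathcal{B}(\Phi_\ep-\overline{\Phi_\ep})\|_{L^\infty}$ and later absorbed into the main positive term for $t>T_0$ via Lemma \ref{lem2}.

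Next I would send $\ep\to 0^+$ in the principal term by Lemma \ref{lem2.1}; it splits as
\[
-\int_0^\infty\!\!\varphi\!\int_\om\rho_s^{-\te}(\rho^\ga-\rho_s^\ga)(\rho^\te-\rho_s^\te)\,dx\,dt + \int_0^\infty\!\!\varphi\bigg(\int_\om\rho_s^{-1}(\rho^\ga-\rho_s^\ga)\,dx\bigg)\overline{\rho_s^{1-\te}(\rho^\te-\rho_s^\te)}\,dt.
\]
By (\ref{x2}) the first summand is $\leq-C_0\int_0^\infty\varphi\int_\om G(\rho,\rho_s)\,dxdt$, furnishing the coefficient $C_0$ (twice the desired $C_0/2$) on the LHS. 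The second summand has no definite sign; to handle it I would plug the Taylor expansion (\ref{texp}) into the mass conservation identity $\overline{\rho-\rho_s}=0$, yielding
\[
\bigl|\overline{\rho_s^{1-\te}(\rho^\te-\rho_s^\te)}\bigr|\leq C\,\overline{(\rho^\te-\rho_s^\te)^2}\leq C\,\overline{G(\rho,\rho_s)},
\]
again by (\ref{x2}). Combined with the elementary pointwise bound $\int_\om|\rho_s^{-1}(\rho^\ga-\rho_s^\ga)|\,dx\leq C\|\rho-\rho_s\|_{L^\ga(\om)}$, Lemma \ref{lem2} supplies smallness for $t>T_0=T_0(\rho,u)$, so the mean-value correction is absorbed into the second half, $(C_0/2)\int\varphi\int G(\rho,\rho_s)\,dxdt$, of the positive term.

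Finally the convective and viscous contributions $\int\!\varphi\!\int\rho u\otimes u:\na\psi_\ep\,dxdt$ and $\int\!\varphi\!\int(\mu\na u:\na\psi_\ep+(\mu+\lam)\divg u\,\divg\psi_\ep)\,dxdt$ are estimated by $C\int\varphi\|\na u\|_{L^2}^2dt$ via Hölder, the Bogovskii/Sobolev bound $\|\psi_\ep\|_{H^1}\leq C$ and the energy bound (\ref{E11}); the time-derivative piece produces exactly $\int\varphi_t\int\rho u\cdot\psi_\ep\,dxdt$ on the LHS of (\ref{3.9}) together with an internal residual $\int\varphi\int\rho u\cdot\pa_t\psi_\ep\,dxdt$, which is rewritten using (\ref{2.3}) and again bounded by $C\int\varphi\|\na u\|_{L^2}^2dt$, the commutator $r_\ep$ disappearing in the limit via (\ref{x35}). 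The hard step is the sign-indefinite mean-value correction, entirely absent in \cite{PSW} where $\rho_s$ is constant; transferring it to smallness of $\overline{G(\rho,\rho_s)}$ through the Taylor remainder and then invoking Lemma \ref{lem2} is precisely the device that closes the estimate, and is responsible for $T_0$ having to depend on the particular weak solution $(\rho,u)$.
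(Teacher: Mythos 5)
Your proposal follows essentially the same route as the paper: the same test function $\varphi(t)\rho_s^{-1}\mathcal{B}\bigl(\rho_s^{1-\te}([\rho^\te]_\ep-[\rho_s^\te]_\ep)-\overline{\cdots}\bigr)$, the same use of the identity \eqref{equation} to absorb the large force into $G(\rho,\rho_s)\nabla\rho_s^{-1}$, the same Taylor-expansion device \eqref{texp} combined with $\overline{\rho-\rho_s}=0$ to control the sign-indefinite mean-value correction, and the same invocation of Lemma \ref{lem2} to choose $T_0$ so that the small factors can be absorbed into $(C_0/2)\int\varphi\int G\,dxdt$. The treatment of the time-derivative residual via the renormalized equation (including the extra commutator term from $\nabla\rho_s^{1-\te}$, the paper's $I_2^4$) and the vanishing of $r_\ep$ via \eqref{x35} are all as in the paper.

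One intermediate bound in your write-up is imprecise: the Taylor remainder
$\tfrac{1}{\te}(\tfrac{1}{\te}-1)(\rho^\te-\rho_s^\te)^2\int_0^1(1-\tau)\bigl(\rho_s^\te+\tau(\rho^\te-\rho_s^\te)\bigr)^{\frac1\te-2}d\tau$
does not yield $\bigl|\overline{\rho_s^{1-\te}(\rho^\te-\rho_s^\te)}\bigr|\le C\,\overline{(\rho^\te-\rho_s^\te)^2}$ directly, because the factor $\bigl(\rho_s^\te+\tau(\rho^\te-\rho_s^\te)\bigr)^{\frac1\te-2}$ is not uniformly bounded when $\rho$ is large (recall $\te\le\tfrac12$, so the exponent is nonnegative). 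The correct bound, as in \eqref{x16}--\eqref{x17}, carries an additional term, namely
$\bigl|\overline{\rho_s^{1-\te}(\rho^\te-\rho_s^\te)}\bigr|\le C\,\overline{(\rho^\te-\rho_s^\te)^2+|\rho^\te-\rho_s^\te|^{1/\te}}$,
and both pieces are then separately controlled by $G(\rho,\rho_s)$ using $\te\le\min\{\tfrac\gamma6,\tfrac12\}$. Since your final conclusion $\le C\,\overline{G(\rho,\rho_s)}$ is the same, this is a gap in the displayed intermediate step rather than in the argument as a whole, but it should be repaired.
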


 With Proposition \ref{3.1} at hand, we are now ready to give the proof of Theorem \ref{t1}.

\noindent{\bf Proof of Theorem \ref{t1}.} Indeed,  adding $(\ref{3.9})$ multiplied by a suitably small constant $\de > 0$,
which will be determined later, to $(\ref{x1})$, we have
\be\ba\label{x24}
  -\int_0^\infty \vp_t V_\de(t) dt + \int_0^\infty \vp W_\de(t) dt \leq 0,
\ea\ee
where
\be\ba\label{v}
  V_\de(t) := \int_\om \left(\frac{1}{2}\rho|u|^2+G(\rho, \rho_s)
  -\de \rho u\cdot\rho_s^{-1}\mathcal{B}(\rho_s^{1-\te}(\rho^\te -\rho_s^\te)
  -\overline{\rho_s^{1-\te}(\rho^\te -\rho_s^\te)}) \right)dx
\ea\ee
and
\be\ba\label{w}
  W_\de(t) := \int_\om \left((\mu - C\de)|\na u|^2
    + (\lambda+\mu)(\div u)^2 + \frac{C_0\de}{2}G(\rho,\rho_s)\right)dx.
\ea\ee
Moreover, it follows from Lemma $\ref{lem2.5}$ and \eqref{rho}  that
\be\ba\label{x25}
&
  \int_\om\rho u\cdot\rho^{-1}_s\mathcal{B}
  \left(\rho_s^{1-\te}(\rho^\theta-\rho_s^\theta)
  -\overline{\rho_s^{1-\te}(\rho^\te-\rho_s^\te)}\right)dx\\
\leq&
  \|\sqrt{\rho} u\|_{L^2(\om)}\|\sqrt{\rho}\|_{L^3(\om)}\|\rho^{-1}_s\|_{L^\infty(\om)}
  \left\lVert\mathcal{B}
  \left(\rho_s^{1-\te}(\rho^\theta-\rho_s^\theta)
  -\overline{\rho_s^{1-\te}(\rho^\te-\rho_s^\te)}\right)\right\rVert_{L^6(\om)}\\
\leq&
  C\|\sqrt{\rho} u\|_{L^2(\om)}
  \left\lVert\na\mathcal{B}
  \left(\rho_s^{1-\te}(\rho^\theta-\rho_s^\theta)
  -\overline{\rho_s^{1-\te}(\rho^\te-\rho_s^\te)}\right)\right\rVert_{L^2(\om)}\\
\leq&
  C\|\sqrt{\rho} u\|_{L^2(\om)}
 \left( \|\rho_s^{1-\te}(\rho^\theta-\rho_s^\theta)\|_{L^2(\om)}+|\overline{\rho_s^{1-\te}(\rho^\te-\rho_s^\te)}|\right)\\
\leq&
  C\|\sqrt{\rho} u\|_{L^2(\om)}
  \|\rho_s^{1-\te}\|_{L^\infty(\om)}\|\rho^\theta-\rho_s^\theta\|_{L^2(\om)}\\
\leq&
  \frac{1}{2}\int_\om \rho|u|^2 dx + C\int_\om G(\rho,\rho_s) dx,
\ea\ee
where in the last inequality we have used $(\ref{x2})$. Then, by choosing  a suitably  small
$\de_0 >0$ which depends only on
$\om,\ \ga,\ \mu,\ \lambda,\ F,\ \ol{\n_0}$ and $E_0$, one has
\be\ba\label{x26}
  \frac{1}{4}\int_\om \left(\frac{1}{2}\rho|u|^2 + G(\rho,\rho_s)\right)dx\leq V_{\de_0}(t)
  \leq 2\int_\om \left(\frac{1}{2}\rho|u|^2 + G(\rho,\rho_s)\right)dx
\ea\ee
and
\be\ba\label{x27}
  W_{\de_0}(t) \geq \frac{1}{4}
  \int_\om (\mu|\na u|^2 + C_0\de_0G(\rho,\rho_s) )dx.
\ea\ee
Note that
\be\ba\label{x28}
  \int_\om \rho|u|^2 dx \leq \|\rho\|_{L^{\frac{3}{2}}(\om)}
    \|u\|_{L^{6}(\om)}^2\leq C(\om,\ga,E_0,\ol{\n_0}) \|\na u\|_{L^2(\om)}^2,
\ea\ee
which combined  with $(\ref{x26})$ and $(\ref{x27})$,  implies that
\be\ba\label{x29}
V_{\de_0(t)}\leq C_1W_{\de_0}(t) \quad \mathrm{a.e.} \ t\in (0,+\infty),
\ea\ee
 where $C_1$ is a positive constant depending
only on $\om,\ \ga,\ \mu,\ \lambda,\ F,\ E_0$ and $\ol{\n_0}$.

Putting  $(\ref{x29})$ into $(\ref{x24})$ gives
for any $0 \leq \vp \in \mathcal{D}(0, \infty)$
with supp $\varphi \subset (T_0,+\infty)$,
\be\ba\label{x30}
  -\int_0^\infty \vp_tV_{\de_0}(t)dt +
  C_1 \int_0^\infty \vp V_{\de_0}(t)dt \leq 0.
\ea\ee
Let $[a,b]$ be any compact subset of $(T_0,+\infty)$ and $\tilde{\eta}$
be the standard mollifier in $\mathbb{R}$.
Taking  $\vp = \ep^{-1} \tilde{\eta}(\frac{\tau -t }{\ep})$
with $\ep \in (0,\min\{a,\frac{b-a}{8}\})$ into $(\ref{x30})$, leads to
\be\ba\label{x31}
\pa_\tau[V_{\de_0}]_\ep + C_1[V_{\de_0}]_\ep \leq 0, \quad
 \mathrm{for\ almost\ every\ }\tau \in[a,b],
\ea\ee
where
\be
[V_{\de_0}]_\ep(\tau):= \frac{1}{\ep}
   \int_0^\infty \tilde{\eta}(\frac{\tau - t}{\ep}) V_{\de_0}(t)dt.
\ee
Thus,  by $(\ref{x31})$, we obtain
\be\ba\label{x32}
  [V_{\de_0}]_\ep(t) \leq [V_{\de_0}]_\ep(s)e^{-C_1(t-s)},
\ea\ee
for almost every $T_0 < s< t <+\infty$.

And then, since $V_{\de_0}(t) \in L^\infty_{\mathrm{loc}}(0,\infty)$,  letting
 $\ep \to 0$ causes
\be\ba\label{x33}
  V_{\de_0}(t) \leq V_{\de_0}(s)e^{-C_1(t-s)}.
\ea\ee
This combined  with $(\ref{x26})$ and $(\ref{E11})$, and letting $s$ converge to $T_0$,
yields   $(\ref{expdec})$.

Finally, we  turn back to prove our  Proposition \ref{3.1} to finish our whole proof.

\noindent{\bf Proof of Proposition \ref{3.1}.}
First, to handle the large  potential external forces,
we introduce the following equality,
initially established  by \cite{HLX}, which plays a crucial role in our subsequent arguments,
\be\ba\la{equation}
&
  \rho_s^{-1}(\nabla(\rho^\gamma-\rho_s^\gamma)
  -\gamma(\rho-\rho_s)\rho_s^{\gamma-2}\nabla\rho_s) \\
&
  =\nabla(\rho_s^{-1}(\rho^\gamma-\rho_s^\gamma))
  -(\gamma-1)G(\rho,\rho_s)\nabla\rho^{-1}_s.
\ea\ee
With the aid of \eqref{equation},
we can rewrite $\eqref{1.1}_2$  as
\be\ba
\nabla(\rho_s^{-1}(\rho^\gamma-\rho_s^\gamma))
  =&-\rho_s^{-1}\left((\rho u)_t+\div{(\rho u\otimes u)}-\mu\Delta u-(\mu+\lam)\na\div{u}\right)
  \\&+(\gamma-1)G(\rho,\rho_s)\nabla\rho^{-1}_s.
\ea\ee

Next, for any $0\leq\vp \in \mathcal{D}(0,\infty)$
with supp $\varphi\in(T_0,+\infty)$,
 we set
\be\la{x4}
\Phi(x,t)=\varphi(t)\rho_s^{-1}\mathcal{B}(\rho_s^{1-\theta}([\rho^\theta]_{\ep}-[\rho_s^\theta]_{\ep})
-\overline{\rho_s^{1-\theta}([\rho^\theta]_{\ep}-[\rho_s^\theta]_{\ep})}),
\ee
where $T_0$ is determined later.

Since $\rho\in L^{\gamma+\theta}(0,T;L^{\ga+\te}(\Omega))$,  we can take $\Phi(x,t)$ as a test function for $(1.1)_2$ to get
\be\la{3.6}\ba
&\int_0^\infty\varphi\int_\Omega\rho_s^{-1}(\rho^\gamma-\rho_s^\gamma)
  (\rho_s^{1-\theta}([\rho^\te]_\ep - [\rho_s^\te]_\ep) - \overline{\rho_s^{1-\theta}([\rho^\te]_\ep - [\rho_s^\te]_\ep)})dxdt\\
&
  +\int_0^\infty\varphi_t\int_\Omega\rho u\cdot\rho_s^{-1}\mathcal{B}(\rho_s^{1-\theta}(\rho^\theta-\rho_s^\theta)
  -\overline{\rho_s^{1-\theta}(\rho^\theta-\rho_s^\theta)})dxdt\\
&
  =-\int_0^\infty\varphi_t\int_\Omega\rho u\cdot\rho_s^{-1}(H_\varepsilon(x,t)-H(x,t))dxdt\\
&\quad
  -\int_0^\infty\varphi\int_\Omega \rho u \cdot\rho_s^{-1} \pa_t H_\varepsilon(x,t) dxdt\\
&\quad
  -\int_0^\infty\varphi\int_\Omega\left(\rho_s^{-1}(\rho u\otimes u):\nabla H_\varepsilon(x,t)
  +\nabla\rho_s^{-1}\cdot(\rho u\otimes u)\cdot H_\varepsilon(x,t)\right)dxdt\\
&\quad
  +\mu\int_0^\infty\varphi\int_\Omega\left(\rho_s^{-1}\nabla u:\nabla H_\varepsilon(x,t)
  +\nabla\rho_s^{-1}\cdot\nabla u\cdot H_\varepsilon(x,t)\right) dxdt\\
&\quad
  +(\lam+\mu)\int_0^\infty\varphi\int_\Omega\rho_s^{-1}{\rm div} u(\rho_s^{1-\theta}([\rho^\theta]_{\ep}-[\rho_s^\theta]_{\ep})
  -\overline{\rho_s^{1-\theta}([\rho^\theta]_{\ep}-[\rho_s^\theta]_{\ep})})dxdt\\
  &\quad
  +(\lam+\mu)\int_0^\infty\varphi\int_\Omega{\rm div} u \nabla\rho_s^{-1}\cdot H_\varepsilon(x,t))dxdt\\
&
  \quad-(\gamma-1)\int_0^\infty\varphi\int_\Omega G(\rho,\rho_s)\nabla\rho_s^{-1}\cdot H_\varepsilon(x,t)dxdt\\
&
  :=-\int_0^\infty\varphi_t I_1dt+\int_0^\infty\varphi\sum_{i=2}^{7}I_idxdt,
\ea\ee
where
\be\ba\la{hh}
&H(x,t) := \mathcal{B}\left(\n_s^{1-\te}(\n^\te - \n_s^\te ) - \ol{\n_s^{1-\te}(\n^\te - \n_s^\te )}\right),\\
&H_\varepsilon(x,t) := \mathcal{B}\left(\n_s^{1-\te}([\n^\te]_\varepsilon - [\n_s^\te]_\varepsilon ) - \ol{\n_s^{1-\te}([\n^\te]_\varepsilon - [\n_s^\te ]_\varepsilon)}\right).
\ea\ee
Then, we estimate each $I_i(i=1,2,\cdots,7)$ as follows.

By virtue of (\ref{bog1}) and \eqref{rho}, we obtain
\be\ba\la{x5}
|I_1|&\leq \|\sqrt{\rho}\|_{L^3(\Omega)} \|\sqrt{\rho}u\|_{L^2(\Omega)}\|\rho_s^{-1}\|_{L^\infty(\Omega)}\|H_\varepsilon(x,t) - H(x,t)\|_{L^6(\Omega)}\\
&\leq C\|\nabla(H_\varepsilon(x,t) - H(x,t))\|_{L^2(\Omega)}\\
&
\leq C\|\rho_s^{1-\theta}([\rho^\theta]_{\ep}-[\rho_s^\theta]_{\ep})
-
\rho_s^{1-\theta}(\rho^\theta-\rho_s^\theta)\|_{L^2(\Omega)}
\\&
\leq C\|\rho_s\|^{1-\theta}_{L^\infty(\Omega)}
(\|[\rho^\theta]_\ep-\rho^\theta\|_{L^2(\Omega)}
+\|[\rho_s^\theta]_\ep-\rho_s^\theta\|_{L^2(\Omega)})\\
&\leq  C(\|[\rho^\theta]_\ep-\rho^\theta\|_{L^2(\Omega)}
+\|[\rho_s^\theta]_\ep-\rho_s^\theta\|_{L^2(\Omega)}),
\ea\ee
which together with $\rho^\te \in L^1(0,T;L^2(\om))$ (due to
$2\te <\ga$) gives
\be\ba\label{x34}
\left\lvert\int_0^\infty \vp_t I_1 dt \right\rvert \to 0,\quad
\mathrm{as\ \ep}\to 0,
\ea\ee
and
\be\ba\la{x6}
|I_3|&\leq \|\rho\|_{L^\gamma(\Omega)} \|u\|_{L^6(\Omega)}^2\|\rho_s^{-1}\|_{W^{1,\infty}(\Omega)}
\|H_\varepsilon(x,t)\|_{W^{1,\frac{3\gamma}{2\gamma-3}}(\Omega)}
\\&
\leq C\|\nabla u\|_{L^2(\Omega)}^2\|\rho_s^{1-\te}\|_{L^\infty(\om)}
    \bigg(\|\rho^\theta\|_{L^\frac{3\gamma}{2\gamma-3}(\Omega)}+\|\rho_s^\theta\|_{L^\frac{3\gamma}{2\gamma-3}(\Omega)}\bigg)\\&
\leq C\|\nabla u\|_{L^2(\Omega)}^2,
\ea\ee
where in the last inequality we have used the fact $\theta\leq \frac{2\gamma-3}{3}.$

Moreover, by \eqref{bog1}, \eqref{rho}, we have
\be\ba\la{x7}
|I_4+I_5+I_6|&
\leq C\|\nabla u\|_{L^2(\Omega)}\|\rho_s^{1-\theta}([\rho^\theta]_{\ep}-[\rho_s^\theta]_{\ep})
-\overline{\rho_s^{1-\theta}([\rho^\theta]_{\ep}-[\rho_s^\theta]_{\ep})}\|_{L^2(\Omega)}\\&
\leq C\|\nabla u\|_{L^2(\Omega)}\|\rho_s\|^{1-\theta}_{L^\infty(\Omega)}\|\rho^\theta-\rho_s^\theta\|_{L^2(\Omega)}\\&
\leq C\|\nabla u\|_{L^2(\Omega)}^2
  +\frac{1}{4}\|\rho_s^{-\theta}(\rho^\gamma-\rho_s^\gamma)(\rho^\theta-\rho_s^\theta)\|_{L^1(\Omega)},
\ea\ee
where  in the last inequality,  we have applied $(\ref{x2})$ and Cauchy-Schwarz inequality.

As for $I_7$, by $(\ref{x2})$
\be\ba\la{x8}
|I_7|&\leq C\|G(\rho,\rho_s)\|_{L^1(\Omega)}\|\nabla\rho_s^{-1}\|_{L^\infty(\Omega)}
\|H_\varepsilon(x,t)\|_{L^\infty(\Omega)}\\
&
\leq C\|G(\rho,\rho_s)\|_{L^1(\Omega)}\|[\rho^\theta]_{\ep}-[\rho_s^\theta]_{\ep}
-(\overline{[\rho^\theta]_{\ep}-[\rho_s^\theta]_{\ep}})\|_{L^\frac{\gamma}{\theta}(\Omega)}\\
&
\leq C\|\rho_s^{-\theta}(\rho^\gamma-\rho_s^\gamma)(\rho^\theta-\rho_s^\theta)\|_{L^1(\Omega)}
\|[\rho^\theta]_{\ep}-[\rho_s^\theta]_{\ep}\|_{L^\frac{\gamma}{\theta}(\Omega)}\\
&
\leq C(\om,\ga,F,\ol{\n_0})
\|\rho^\theta-\rho_s^\theta\|_{L^\frac{\gamma}{\theta}(\Omega)}
\cdot
\|\rho_s^{-\theta}(\rho^\gamma-\rho_s^\gamma)(\rho^\theta-\rho_s^\theta)\|_{L^1(\Omega)},
\ea\ee
 due to $\theta\leq \frac{\gamma}{6}.$

We now return to deal with $I_2$.  Multiplying both sides of (\ref{2.3}) by $\rho_s^{1-\te}$ leads to
\be\ba\label{x36}
  (\rho_s^{1-\te}[\rho^\te]_\ep)_t+ \div(\rho_s^{1-\te}[\rho^\te]_\ep u)
  -\na \rho_s^{1-\te}\cdot[\rho^\te]_\ep u
  =(1-\te)\rho_s^{1-\te}[\rho^\te\div u]_\ep + \rho_s^{1-\te}r_\ep.
\ea\ee
Integrating \eqref{x36} over $\Omega$ and by \eqref{1.5} yield
\be\ba\label{x37}
  \overline{(\rho_s^{1-\te}[\rho^\te]_\ep)_t}
  - \overline{\na \rho_s^{1-\te}\cdot [\rho^\te]_\ep u}
  =(1-\te) \overline{\rho_s^{1-\te}[\rho^\te\div u]_\ep}
  +\overline{\rho_s^{1-\te}r_\ep}.
\ea\ee
Combining (\ref{x36}) with (\ref{x37}), we obtain
\be\ba\label{x38}
&\left(\rho_s^{1-\theta}([\rho^\theta]_{\ep}-[\rho_s^\theta]_{\ep})
  -\overline{\rho_s^{1-\theta}([\rho^\theta]_{\ep}-[\rho_s^\theta]_{\ep})}\right)_t\\
=&
  (\rho_s^{1-\te}[\rho^\te]_\ep - \overline{\rho_s^{1-\te}[\rho^\te]_\ep })_t\\
=&
-\div(\rho_s^{1-\te}[\rho^\te]_{\ep} u)
    +\na\rho_s^{1-\te}\cdot[\rho^\te]_\ep u
    -\overline{\na\rho_s^{1-\te}\cdot[\rho^\te]_\ep u}\\
&
  +(1-\te)(\rho_s^{1-\te}[\rho^\te\div u]_\ep
  -\overline{\rho_s^{1-\te}[\rho^\te\div u]_\ep})
  +\rho_s^{1-\te}r_\ep - \overline{\rho_s^{1-\te}r_\ep},
\ea\ee
which gives
\be\ba\la{x10}
I_2&=\int_\Omega \rho u \cdot\rho_s^{-1}\mathcal{B}(\div{(\rho_s^{1-\te}[\rho^\theta]_\ep u)})dx\\
&\quad-(1-\theta)\int_\Omega \rho u \cdot\rho_s^{-1}
\mathcal{B}(\rho_s^{1 - \te}[\rho^\theta\div{u}]_\ep-\overline{\rho_s^{1-\te}[\rho^\theta\div{u}]_\ep})dx\\
&\quad -\int_\Omega \rho u \cdot\rho_s^{-1}\mathcal{B}(\rho_s^{1-\te}r_\ep-\overline{\rho_s^{1-\te}r_\ep})dx\\
&\quad -\int_\Omega \rho u \cdot \rho_s^{-1}\mathcal{B}
\left(\nabla\rho_s^{1-\te}[\rho^\te]_\ep u
- \overline{\nabla\rho_s^{1-\te}[\rho^\te]_\ep u}\right)dx\\
&:=\sum_{i=1}^{4}I_2^i.
\ea\ee

Now we will estimate the above four terms on the right-hand side
of (\ref{x10}) in sequence.

By using (\ref{bog2}), we have
\be\ba\la{x11}
|I_2^1|
&
  \leq C\|\rho\|_{L^{\gamma}(\Omega)}\|u\|_{L^6(\Omega)}\|\mathcal{B}(\div{(\rho_s^{1- \te}[\rho^\theta]_\ep u)})\|_{L^{\frac{6\gamma}{5\gamma-6}}(\Omega)}\\
&
  \leq C \|\na u\|_{L^2(\om)}\|\rho_s^{1-\te}\|_{L^\infty(\om)}
  \|[\rho^\te]_\ep u\|_{L^{\frac{6\gamma}{5\gamma-6}}(\om)}\\
&
  \leq C\|\nabla u\|_{L^2(\Omega)}^2\|\rho^\theta\|_{L^{\frac{3\gamma}{2\gamma-3}}(\Omega)}\\
&
  \leq C\|\nabla u\|_{L^2(\Omega)}^2,
\ea\ee
because of $\theta\leq \frac{2\gamma-3}{3}.$

For $\widetilde{\gamma}:=\min\{\gamma,5\}$,
due to $\te \leq\frac{\ga(2\tilde{\ga}-3)}{3\tilde{\ga}}$, we get by Lemma \ref{lem2.1}(ii) and (\ref{bog1})
\be\ba\la{x12}
|I_2^2|&\leq C\|\rho\|_{L^{\widetilde{\gamma}}(\Omega)}\|u\|_{L^6(\Omega)}
\|\mathcal{B}(\rho_s^{1-\te}[\rho^\theta\div{u}]_\ep-\overline{\rho_s^{1-\te}[\rho^\theta\div{u}]_\ep})
\|_{L^{\frac{6\widetilde{\gamma}}{5\widetilde{\gamma}-6}}(\Omega)}\\&
\leq C\|\nabla u\|_{L^2(\Omega)}\|\nabla\mathcal{B}(\rho_s^{1-\te}[\rho^\theta\div{u}]_\ep-\overline{\rho_s^{1-\te}[\rho^\theta\div{u}]_\ep})
\|_{L^{\frac{6\widetilde{\gamma}}{7\widetilde{\gamma}-6}}(\Omega)}\\&
\leq C\|\nabla u\|_{L^2(\Omega)}\|[\rho^\theta\div{u}]_\ep\|_{L^{\frac{6\widetilde{\gamma}}{7\widetilde{\gamma}-6}}(\Omega)}\\&
\leq C\|\nabla u\|_{L^2(\Omega)}^2\|\rho^\theta\|_{L^{\frac{3\widetilde{\gamma}}{2\widetilde{\gamma}-3}}(\Omega)}\\&
\leq C\|\nabla u\|_{L^2(\Omega)}^2.
\ea\ee

Similarly, by (\ref{bog1}) again, we have
\be\ba\la{x13}
  |I_2^3|&\leq C\|\rho\|_{L^{\widetilde{\gamma}}(\Omega)}\|u\|_{L^6(\Omega)}
  \|\mathcal{B}(\rho_s^{1-\te}r_\ep-\overline{\rho_s^{1-\te}r_\ep})\|
  _{L^{\frac{6\widetilde{\gamma}}{5\widetilde{\gamma}-6}}(\Omega)}\\
&
  \leq C\|\nabla u\|_{L^2(\Omega)}
  \|\nabla\mathcal{B}(\rho_s^{1-\te}r_\ep-\overline{\rho_s^{1-\te}r_\ep})\|
  _{L^{\frac{6\widetilde{\gamma}}{7\widetilde{\gamma}-6}}(\Omega)}\\
&
  \leq C\|\nabla u\|_{L^2(\Omega)}
  \| \rho_s^{1-\te}r_\ep-\overline{\rho_s^{1-\te}r_\ep}\|
  _{L^{\frac{6\widetilde{\gamma}}{7\widetilde{\gamma}-6}}(\Omega)}\\
&
  \leq C \|\nabla u\|_{L^2(\Omega)}\|\rho_s^{1-\te}r_\ep\|
  _{L^{\frac{6\widetilde{\gamma}}{7\widetilde{\gamma}-6}}(\Omega)}\\
&
  \leq C \|\nabla u\|_{L^2(\Omega)}\|r_\ep\|
  _{L^{\frac{6\widetilde{\gamma}}{7\widetilde{\gamma}-6}}(\Omega)},
\ea\ee
which, together with (\ref{E11}) and (\ref{x35}), leads to
\be\ba\label{x20}
  \int_0^\infty \vp |I_2^3| dt \leq
  &
    C\left(\int_0^\infty  \vp\|\na u\|^2_{L^2(\Omega)} dt\right)^\frac{1}{2}
    \cdot \left(\int_0^\infty\vp\|r_\ep\|^2
    _{L^\frac{6\widetilde{\ga}}{7\widetilde{\ga}-6}(\Omega)} dt \right)^\frac{1}{2}\\
  \to& 0, \quad \text{as}\,\, \ep\to 0,
\ea\ee
due to $\te \leq \frac{\ga(2\widetilde{\ga}-3)}{3\widetilde{\ga}}.$

For $I_2^4$,   Lemma \ref{lem2.1}(ii) and (\ref{bog1}) give
\be\ba\la{x14}
  |I_2^4|\leq
&
  C\|\rho\|_{L^\ga(\Omega)} \|u\|_{L^6(\Omega)}
  \|\mathcal{B}(\nabla\rho_s^{1-\te}[\rho^\te]_\ep u
  - \overline{\nabla\rho_s^{1-\te}[\rho^\te]_\ep u})\|_{L^{\frac{6\ga}{5\ga - 6}}(\Omega)}\\
\leq&
  C\|\rho\|_{L^\ga(\Omega)} \|u\|_{L^6(\Omega)}
  \|\na\rho_s^{1-\te}[\rho^{\te}]_\ep u\|_{L^{\frac{6\ga}{5\ga - 6}}(\Omega)}\\
\leq&
  C \|\rho\|_{L^\ga(\Omega)} \|u\|^2_{L^6(\Omega)}
  \|\na\rho_s^{1-\te}\|_{L^\infty(\Omega)}\|\rho^\te\|_{L^{\frac{3\ga}{2\ga -3 }}(\Omega)}\\
\leq&
  C\|\na u \|^2_{L^2(\Omega)},
  \ea\ee
where we have used the fact $\theta\leq \frac{2\gamma-3}{3}.$

Next, for the first term on the lefthand side of \eqref{3.6}, we have
\be\ba\label{x15}
&
  \int_0^\infty\varphi\int_\Omega\rho_s^{-1}(\rho^\gamma-\rho_s^\gamma)
  \left(\rho_s^{1-\theta}([\rho^\te]_\ep - [\rho_s^\te]_\ep) - \overline{\rho_s^{1-\theta}([\rho^\te]_\ep - [\rho_s^\te]_\ep)}\right)dxdt\\&
=\int_0^\infty\vp\int_\Omega\rho_s^{-1}(\rho^\gamma-\rho_s^\gamma)
  \left(\rho_s^{1-\te}(\rho^\te - \rho_s^\te) - \overline{\rho_s^{1-\te}(\rho^\te - \rho_s^\te)}\right) dxdt\\
&
  \quad +\int_0^\infty\varphi\int_\Omega\rho_s^{-1}(\rho^\gamma-\rho_s^\gamma)\left( \rho_s^{1-\te}([\rho^\te]_{\ep} -\rho^\te) - \rho_s^{1-\te}([\rho_s^\te]_\ep - \rho_s^\te)\right)
  dxdt\\
&
  \quad - \int_0^\infty\varphi\int_\Omega\rho_s^{-1}(\rho^\gamma-\rho_s^\gamma)
  (\overline{\rho_s^{1-\te}([\rho^\te]_\ep - \rho^\te) -\rho_s^{1-\te} ([\rho^\te_s]_\ep - \rho^\te_s)}) dxdt\\
&
  :=\int_0^\infty\vp\int_\Omega\rho_s^{-\te}(\rho^\gamma-\rho_s^\gamma)
  (\rho^\theta-\rho_s^\theta)dxdt + K_1+K_2+K_3,
\ea\ee
where
\be\ba\label{k1}
K_1 :=
&\int_0^\infty\varphi\int_\Omega\rho_s^{-1}(\rho^\gamma-\rho_s^\gamma)\left( \rho_s^{1-\te}([\rho^\te ]_{\ep} -\rho^\te) - \rho_s^{1-\te}([\rho_s^\te]_\ep - \rho_s^\te)\right)
dxdt,\\
\ea\ee
\be\ba\la{k2}
K_2 : =   - \int_0^\infty\varphi\int_\Omega\rho_s^{-1}(\rho^\gamma-\rho_s^\gamma)
(\overline{\rho_s^{1-\te}([\rho^\te]_\ep - \rho^\te) -\rho_s^{1-\te} ([\rho^\te_s]_\ep - \rho^\te_s)}) dxdt,\\
\ea\ee
\be\ba\la{k3}
K_3 := -\int_0^\infty\vp\int_\Omega \rho_s^{-1}(\rho^\ga - \rho_s^\ga)
 \overline{\rho_s^{1-\te}(\rho^\te - \rho_s^\te)}dxdt.
\ea\ee
As for $K_1$, Lemma \ref{lem2.1}(ii) and H\"older's inequality imply that
\be\ba\label{kk1}
|K_1|
&
  \leq C\|\rho_s^{-\te}\|_{L^{\infty}(\Omega)}
  \int_0^\infty\vp
  \|\rho^\gamma-\rho_s^\gamma\|_{L^{\frac{\gamma+\theta}{\ga}}(\Omega)}
  \|[\rho^\theta]_\ep-\rho^\theta\|_{L^{\frac{\gamma+\theta}{\theta}}(\Omega)}dt\\
 &\quad+C\|\rho_s^{-\te}\|_{L^{\infty}(\Omega)}
  \int_0^\infty\vp
  \|\rho^\gamma-\rho_s^\gamma\|_{L^{\frac{\gamma+\theta}{\ga}}(\Omega)}
  \|[\rho_s^\theta]_\ep-\rho_s^\theta\|_{L^{\frac{\gamma+\theta}{\theta}}(\Omega)}dt\\
 &
  \leq C\bigg(\int_0^\infty\vp\|\rho^\gamma-\rho_s^\gamma\|_{L^{\frac{\gamma+\theta}{\ga}}(\Omega)}^{\frac{\gamma+\theta}{\ga}}dt\bigg)^{\frac{\ga}{\ga+\theta}} \bigg(\int_0^\infty\vp\|[\rho^\theta]_\ep-\rho^\theta\|_{L^{\frac{\gamma+\theta}{\theta}}(\Omega)}^{\frac{\gamma+\theta}{\theta}}dt\bigg)^{\frac{\theta}{\ga+\theta}}\\
&
  \quad+C(\vp)\bigg(\int_0^\infty\vp\|\rho^\gamma-\rho_s^\gamma\|_{L^{\frac{\gamma+\theta}{\ga}}(\Omega)}^{\frac{\gamma+\theta}{\ga}}dt\bigg)^{\frac{\ga}{\ga+\theta}}
  \|[\rho_s^\theta]_\ep -\rho_s^\theta\|_{L^{\frac{\gamma+\theta}{\theta}}(\Omega)}\\
&
  \leq C\bigg(\int_0^\infty\vp\|[\rho^\theta]_\ep-\rho^\theta\|_{L^{\frac{\gamma+\theta}{\theta}}(\Omega)}
    ^{\frac{\gamma+\theta}{\theta}}dt\bigg)^{\frac{\theta}{\ga+\theta}}
  +C\|[\rho_s^\theta]_\ep-\rho_s^\theta\|_{L^{\frac{\gamma+\theta}{\theta}}(\Omega)}\\
&\to 0, \ \mathrm{as} \ \ep \to 0
\ea\ee
because of $\rho \in L^{\ga+\te}(0,T;L^{\ga+\te}(\Omega))$
  and $ \ \rho_s \in L^{\ga + \te}(\Omega).$

Similarly,
\be\ba\la{kk2}
|K_2|&\leq C
  \int_0^\infty
   \vp\left(\int_\Omega \left\lvert \rho^\ga - \rho_s^\ga\right\rvert dx
  \cdot \int_\Omega \big(| [\rho^\te]_\ep - \rho^\te| + | [\rho_s^\te]_\ep - \rho_s^\te| \big)dx\right) dt\\
&\leq
C\int_0^\infty \vp
    \left(\|[\rho^\te]_\ep - \rho^\te \|_{L^\frac{\ga+\te}{\te}(\Omega)}
      +\|[\rho_s^\te]_\ep - \rho_s^\te \|_{L^\frac{\ga+\te}{\te}(\Omega)}\right)dt\\
&\to 0, \ \mathrm{as} \ \ep \to 0
\ea\ee
due to
 $ \rho^\te \in L^\frac{\ga+\te}{\te}(0,T; L^{\frac{\ga+\te}{\te}}(\om))$
  and $ \rho_s^\te \in L^\frac{\ga + \te}{\te}(\Omega).$

For  $K_3$,
integrating  $(\ref{texp})$ over $\Omega$
and applying  $(\ref{1.8})_4$ lead to
\be\ba\label{x16}
&
\left\lvert  \int_\Omega\rho_s^{1-\te}(\rho^\te - \rho_s^\te)dx \right\rvert\\&
 = \left\lvert \left(\frac{1}{\te}-1\right)\int_\om  \left((\rho^\te - \rho_s^\te)^2
  \int_0^1(1-\tau)
  \left(\rho_s^\te+\tau(\rho^\te - \rho^\te_s)\right)^{\frac{1}{\te} - 2}d\tau \right)dx\right\rvert\\
&
  \leq C\int_\Omega (\rho^\te - \rho_s^\te)^2
     \left(\|\rho_s\|^{1-2\te}_{L^\infty(\Omega)}
      + |\rho^\te - \rho_s^\te|^{\frac{1-2\te}{\te}}\right)dx\\
&
  \leq C \int_\Omega \left((\rho^\te - \rho_s^\te)^2 + |\rho^\te - \rho^\te_s|^\frac{1}{\te}\right)dx\\
&
  \leq C \int_\Omega G(\rho,\rho_s ) dx,
\ea\ee
where in the last inequality we have used the fact
\be\ba\label{x17}
(\rho^\te -\rho_s^\te)^2 + |\rho^\te - \rho_s^\te|^\frac{1}{\te}
  \leq C(\om,\ga,F,\ol{\n_0})G(\rho, \rho_s),
\ea\ee
due to $\te \leq\min\{\frac{\ga}{6},\frac{1}{2}\}$.

Substituting $(\ref{x16})$ into $(\ref{k3})$ induces that
\be\ba\label{x18}
|K_3|
&
  \leq C\left\lvert\int_0^\infty \vp
  \left( \int_\Omega \rho_s^{-1} (\rho^\ga - \rho_s^\ga) dx
  \int_\Omega G(\rho, \rho_s) dx \right)dt\right\rvert\\
&
 \leq C \sup_{T_0<t<\infty}\int_\om \left\lvert\rho^\ga -\rho_s^\ga\right\rvert dx
\cdot
  \int_0^\infty \vp
 \int_\Omega G(\rho, \rho_s) dx dt\\
&
\leq C(\om,\ga,F,\ol{\n_0},E_0) \sup_{T_0<t<\infty}
  \|\rho-\rho_s\|_{L^\ga(\om)} \cdot \int_0^\infty \vp\int_\Omega \rho_s^{-\theta}(\rho^\ga-\rho_s^\ga)(\rho^\theta-\rho_s^\theta)dxdt,
\ea\ee
where in the last inequality we have used
\be\ba\label{x39}
\int_\om \left|\rho^\ga -\rho_s^\ga\right| dx
&
 \leq\int_\om\int_0^1
  \left\lvert\ga(\rho-\rho_s)\left((1-\xi)\rho_s +\xi\rho\right)^{\ga -1}\right\rvert d\xi
  dx\\
&
  \leq C\|\rho-\rho_s\|_{L^\ga(\om)}
  \int_0^1\left\lVert((1-\xi)\rho_s +\xi\rho)^{\ga -1}
    \right\rVert_{L^\frac{\ga}{\ga-1}(\om)}d\xi\\
&
  \leq C\|\rho-\rho_s\|_{L^\ga(\om)}
  (\|\rho\|_{L^{\ga}(\om)}^{\ga-1}
  +\|\rho_s\|_{L^{\ga}(\om)}^{\ga-1})\\
&
  \leq C \|\rho-\rho_s\|_{L^\ga(\om)}.
\ea\ee

Combining  $(\ref{3.6})$, \eqref{x5}--\eqref{x15}, \eqref{kk1}--\eqref{kk2}
and $(\ref{x18})$ gives
\be\ba\label{x19}
&
  \int_0^\infty\vp\int_\Omega\rho_s^{-\te}(\rho^\gamma-\rho_s^\gamma)
    (\rho^\theta-\rho_s^\theta)dxdt\\
&\quad
  +\int_0^\infty\vp_t\int_\Omega\rho u\cdot\rho_s^{-1}
  \mathcal{B}\left(\rho_s^{1-\te}(\rho^\theta-\rho^\te_s)
  -\overline{\rho_s^{1-\te}(\rho^\te-\rho_s^\te)}\right)dxdt\\
&\leq
  \left(\frac{1}{4}+C\sup_{T_0<t<\infty}(\|\rho^\te - \rho_s^\te\|_{L^\frac{\gamma}{\theta}(\Omega)}
  + \|\rho-\rho_s\|_{L^\ga(\om)})\right)
  \int_0^\infty\vp\int_\Omega \rho_s^{-\te}(\rho^\ga - \rho_s^\ga)
  (\rho^\te - \rho_s^\te)dxdt\\
&\quad +
  C\int_0^\infty \vp \|\na u \|_{L^2(\Omega)}^2dt+
  \left\lvert\int_0^\infty \vp_t I_1 dt \right\rvert
  +\int_0^\infty \vp|I_2^3| dt + |K_1| + |K_2|\\
& :=
  \left(\frac{1}{4}+C\sup_{T_0<t<\infty}(\|\rho^\te - \rho_s^\te\|_{L^\frac{\gamma}{\theta}(\Omega)}
  + \|\rho-\rho_s\|_{L^\ga(\om)})\right)
  \int_0^\infty\vp\int_\Omega \rho_s^{-\te}(\rho^\ga - \rho_s^\ga)
  (\rho^\te - \rho_s^\te)dxdt\\
&\quad +
  C\int_0^\infty \vp \|\na u \|_{L^2(\Omega)}^2dt + M(\ep).
\ea\ee
Here, it follows from  $(\ref{x34})$, $(\ref{x20})$, $(\ref{kk1})-(\ref{kk2})$ and  $\rho^\te \in L^1(0,T;L^2(\om)),\,
\rho_s^\te \in L^2(\om)$ that  $\displaystyle \lim_{\ep \to 0}M(\ep)=0$.

Then taking  $\ep \to 0$ in both sides of ($\ref{x19}$), we obtain
\be\ba\label{x21}
&
  \int_0^\infty\vp\int_\Omega\rho_s^{-\te}(\rho^\gamma-\rho_s^\gamma)
    (\rho^\theta-\rho_s^\theta)dxdt\\
&\quad
  +\int_0^\infty\vp_t\int_\Omega\rho u\cdot\rho_s^{-1}
  \mathcal{B}\left(\rho_s^{1-\te}(\rho^\theta-\rho^\theta_s)
  -\overline{\rho_s^{1-\te}(\rho^\te-\rho_s^\te)}\right)dxdt\\
&\leq
  \left(\frac{1}{4}+\tilde{C}\sup_{T_0<t<\infty}(\|\rho^\te - \rho_s^\te\|_{L^\frac{\gamma}{\theta}(\Omega)}
  + \|\rho-\rho_s\|_{L^\ga(\om)})\right)
  \int_0^\infty\vp\int_\Omega \rho_s^{-\te}(\rho^\ga - \rho_s^\ga)
  (\rho^\te - \rho_s^\te)dxdt\\
&\quad +
  C\int_0^\infty \vp \|\na u \|_{L^2(\Omega)}^2dt,
\ea\ee
where $\tilde{C}$ is a positive constant depending on
$\om,\ \ga,\ F,\ \ol{\n_0}$ and $E_0$.

By Lemma \ref{lem2}, we select a sufficiently large constant
$T_0$, depending on $\om,\ \ga,\ F,\ \n$ and $u$,
such that
\be\la{xx2}
\sup_{T_0<t<\infty}\left(\|\rho - \rho_s\|_{L^\ga(\Omega)}
+\norm[L^\ga(\om)]{\n-\n_s}^\te\right)
\leq \frac{1}{8\tilde{C}},
\ee
which together with
$$|\rho^\te - \rho_s^\te|^{\frac{\gamma}{\theta}}\leq |\rho - \rho_s|^\ga$$
gives
\be\ba\label{x22}
  \sup_{T_0<t<\infty} \|\rho^\te- \rho_s^\te\|_{L^{\frac{\gamma}{\theta}}(\Omega)}
  \leq \sup_{T_0<t<\infty}\|\rho - \rho_s\|_{L^\ga(\Omega)}^\te
  \leq \frac{1}{8\tilde{C}}.
\ea\ee

In the end, combining $(\ref{x21})$--$(\ref{x22})$ with $(\ref{x2})$,
we arrive at $(\ref{3.9})$ which completes the proof of Proposition \ref{3.1}.

\section{\la{se4}Further discussion on assumption (\ref{mas})}

In this section, we provide further discussion on the assumption (\ref{mas}).

 Based on the arguments in Remarks \ref{rmk2} and  \ref{rmk2.1},
 the significance of (\ref{mas}) lies in its guarantee of
 the existence and uniqueness of the solution $(\rho_s, 0)$
 to the system (\ref{1.8}) with
 $0 < \rho_s \in W^{1,\infty}(\om)$. A natural question is  whether
 uniqueness of $(\rho_s, 0)$ and the property $0 < \rho_s \in W^{1,\infty}(\om)$ still hold
 if (\ref{mas}) does not hold, namely,
 \be\ba\label{x40}
 \int_\Omega\bigg(\frac{\gamma-1}{\gamma}\big(F-\inf_{\Omega}{F}\big)\bigg)^{\frac{1}{\gamma-1}}dx
 \geq \int_\Omega \rho_0dx.
 \ea\ee

To answer this question,
we present a useful lemma established by Radek \cite{E} as follows, which generalizes the
existence and uniqueness results of \eqref{1.9} from Feireisl-Petzeltov\'a \cite{FP0}.
\begin{lemma}\la{lem4}
  Let $\om$ be a domain and $\ga\in(1,\infty)$  Assume that   $F$ is a Lipschitz continuous
  function on $\displaystyle \overline{\om}$ and $\displaystyle -\infty<\inf_\om F \leq \sup_\om F <\infty$. We set
  \be\ba\la{mk3}
  \hat{B}:=\{k\in(-\infty, \sup_\om F)|
  \{x\in\om: F(x)>k\}\ \mathrm{is\ not\ connected}\}
  \ea\ee
  and
  \be\ba\la{mk4}
  \hat{K} = \begin{cases}
      \inf \hat{B}\quad \,\,\,\ &\mathrm{if} \ \hat{B} \neq\varnothing,\\
      \sup\limits_{\om} F \quad \,\ &\mathrm{if} \ \hat{B} = \varnothing,
  \end{cases}
  \ea\ee
  \be\ba\la{6}
  m:=\int_\om \rho dx\in (0, \infty), \ \ \hat{m}: = \int_\om \left(\frac{\ga-1}{\ga}( F-\hat{K})_+  \right)^{\frac{1}{\ga-1}}dx,
  \ea\ee
  then

{\rm(i)}  if $m \geq \hat{m}$, then there is at most one solution $\rho\in L^\infty_{loc}(\om)$ satisfying \eqref{1.9}.

{\rm(ii)} if $0< \hat{m}< \infty ,\ 0 < m< \hat{m},$ then there is
  a continuum of solutions to \eqref{1.9} in $L^\infty_{loc}(\om)$.
\end{lemma}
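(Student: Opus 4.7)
The first step is to upgrade the regularity of any distributional solution $\n\in L^\infty_{\mathrm{loc}}(\om)$ of $\na(\n^\ga) = \n\na F$. Since the right-hand side lies in $L^\infty_{\mathrm{loc}}$ (by Lipschitz continuity of $F$ and local boundedness of $\n$), $\n^\ga$ is locally Lipschitz, so $\n$ is continuous and $U := \{\n > 0\}$ is open. Dividing by $\n$ on each connected component $U_\alpha$ of $U$ and integrating would produce a constant $k_\alpha \in \mathbb{R}$ with $\n^{\ga-1} = \tfrac{\ga-1}{\ga}(F - k_\alpha)$ on $U_\alpha$. Continuity of $\n$ at $\pa U_\alpha \cap \om$ forces $F = k_\alpha$ there and $F > k_\alpha$ inside $U_\alpha$, and a clopen argument then identifies $U_\alpha$ with a full connected component of $\{F > k_\alpha\}$. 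In summary, every solution would be parametrised by a countable disjoint family $\{(k_\alpha, V_\alpha)\}$ with $V_\alpha$ a connected component of $\{F > k_\alpha\}$, and $\n = \sum_\alpha \bigl(\tfrac{\ga-1}{\ga}(F - k_\alpha)\bigr)^{1/(\ga-1)}\mathbf{1}_{V_\alpha}$.

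\textbf{Uniqueness, part (i).} I would introduce the mass function $M(k) := \int_\om(\tfrac{\ga-1}{\ga}(F - k)_+)^{1/(\ga-1)}dx$, which is continuous on $\mathbb{R}$ and strictly decreasing on $(\inf_\om F, \sup_\om F)$, with $M(\hat K) = \hat m$. Given a solution $\{(k_\alpha, V_\alpha)\}$, I would split into two cases. If some $k_{\alpha_0} < \hat K$, then by definition of $\hat K$ the superlevel set $\{F > k_{\alpha_0}\}$ is connected, so $V_{\alpha_0} = \{F > k_{\alpha_0}\}$; any other nonempty $V_\alpha$ would violate disjointness, since either $V_\alpha \subseteq \{F > k_\alpha\} \subseteq V_{\alpha_0}$ (when $k_\alpha \geq k_{\alpha_0}$) or $V_\alpha = \{F > k_\alpha\}$ is the unique connected component containing $V_{\alpha_0}$ (when $k_\alpha < k_{\alpha_0}$). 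Strict monotonicity of $M$ then pins down $k_{\alpha_0}$ uniquely via $M(k_{\alpha_0}) = m$. If instead every $k_\alpha \geq \hat K$, the pointwise bound $(F - k_\alpha)_+ \leq (F - \hat K)_+$ gives total mass $\leq \hat m$; combined with $m \geq \hat m$ this forces equality, so $k_\alpha = \hat K$ for every $\alpha$ and $\bigsqcup_\alpha V_\alpha = \{F > \hat K\}$, producing the unique solution $\n = \bigl(\tfrac{\ga-1}{\ga}(F - \hat K)_+\bigr)^{1/(\ga-1)}$.

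\textbf{Continuum, part (ii).} For $0 < m < \hat m < \infty$ I would construct an explicit one-parameter family. Using continuity of $M$, pick $k^* > \hat K$ so close to $\hat K$ that the ``full'' configuration at level $k^*$ still has mass exceeding $m$; by definition of $\hat K$, the set $\{F > k^*\}$ has at least two connected components $W_1, W_2$. For parameters $(k_1, k_2)$ in a small neighbourhood of $(k^*, k^*)$ with $k_i > \hat K$, let $W_i(k_i)$ denote the connected component of $\{F > k_i\}$ sitting inside $W_i$, and consider the two-level candidate $\n_{k_1,k_2} := \sum_{i=1,2}\bigl(\tfrac{\ga-1}{\ga}(F - k_i)\bigr)^{1/(\ga-1)}\mathbf{1}_{W_i(k_i)}$. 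Each partial mass is a continuous strictly monotone function of the corresponding $k_i$, so the constraint $\int_\om \n_{k_1,k_2}\,dx = m$ would implicitly define a continuous curve $k_2 = k_2(k_1)$ on a non-degenerate interval, furnishing uncountably many distinct solutions.

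\textbf{Main obstacle.} The genuinely delicate step is the structural one at the start: rigorously upgrading a merely $L^\infty_{\mathrm{loc}}$ distributional solution to a continuous function whose positivity set decomposes into connected components of superlevel sets of $F$. Once that structure theorem is in place, the remainder reduces cleanly to monotonicity of $M$; the only subtle bookkeeping is the boundary case $m = \hat m$, where one must verify that saturating the inequality forces every $k_\alpha = \hat K$ and every component of $\{F > \hat K\}$ to be present.
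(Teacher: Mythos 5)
The paper does not prove Lemma~\ref{lem4}: it is quoted verbatim from Erban~\cite{E} (and builds on Feireisl--Petzeltov\'a~\cite{FP0}), so there is no in-paper proof to compare against. Judged on its own terms, your structural step and part~(i) are sound: upgrading an $L^\infty_{\mathrm{loc}}$ distributional solution to a continuous one via $\nabla(\rho^\gamma)\in L^\infty_{\mathrm{loc}}$, deducing $\rho^{\gamma-1}=\tfrac{\gamma-1}{\gamma}(F-k_\alpha)$ on each component of $\{\rho>0\}$, and the clopen argument identifying that component with a full component of $\{F>k_\alpha\}$ all work; so does the dichotomy ``some $k_{\alpha_0}<\hat K$'' (forcing a single component with $k_{\alpha_0}$ pinned by strict monotonicity of $M$) versus ``all $k_\alpha\ge\hat K$'' (forcing $m=\hat m$ and $k_\alpha\equiv\hat K$ with every component present).

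Part~(ii), however, has a genuine gap. First, $k^*>\hat K=\inf\hat B$ does \emph{not} imply $\{F>k^*\}$ is disconnected: $\hat B$ need not be an interval (e.g.\ $F$ with two bumps of unequal heights, so that above the saddle the superlevel set splits but above the lower bump it becomes connected again). You must take $k^*\in\hat B$, which is possible with $k^*$ arbitrarily close to $\hat K$. Second, and more seriously, your candidate $\rho_{k_1,k_2}$ is supported only in $W_1(k_1)\cup W_2(k_2)$, so its mass is at most the mass carried by $W_1\cup W_2$ at level $k^*$; if $\{F>k^*\}$ has further components carrying a substantial share of $M(k^*)$, that ceiling can be strictly below $m$, and the constraint $\int\rho_{k_1,k_2}=m$ is unsolvable no matter how you vary $(k_1,k_2)$. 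The fix is to keep all the remaining components of $\{F>k^*\}$ at the fixed level $k^*$ (i.e.\ take $\rho_{a}=(\tfrac{\ga-1}{\ga}(F-a)_+)^{1/(\ga-1)}\mathbf{1}_{W_1}+(\tfrac{\ga-1}{\ga}(F-k_a)_+)^{1/(\ga-1)}\mathbf{1}_{\Omega\setminus W_1}$ with $a,k_a\ge k^*$), so that at $a=k_a=k^*$ the mass is $M(k^*)>m$, and then adjust $a$ and $k_a$; a short interval argument shows a non-degenerate range of $a$ admits a compensating $k_a$. Finally, two small points of bookkeeping: ``the connected component of $\{F>k_i\}$ inside $W_i$'' need not be unique (take the full restriction $\{F>k_i\}\cap W_i$, which is a union of components and still yields a valid solution), and $k_i\ge k^*$ is the correct constraint, not $k_i>\hat K$.
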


With Lemma \ref{lem4} at hand, we are now in a position to
point out the necessity of  the assumption
$(\ref{mas})$.
\begin{lemma}\la{lem5}
  Let $F\in W^{1,\infty}(\om)$. Assume that \eqref{x40} holds, then
  there exists at least a solution $(\rho_s, 0)$ to the system \eqref{1.8}
  with $\rho_s \in L^\infty(\om)$
  and moreover, any solution $(\rho_s,0)$
  with $\n_s\in L^\infty_{\text{loc}}(\om)$ satisfies that $\rho_s $ contains
  vacuum on $\ol{\om}$.
\end{lemma}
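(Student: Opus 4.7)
\textbf{Proof plan for Lemma~\ref{lem5}.}
The strategy is to exhibit an explicit bounded equilibrium via an intermediate-value construction, and then to argue that any $L^\infty_{\mathrm{loc}}$ solution essentially bounded away from zero would violate the mass condition \eqref{x40}.

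\emph{Existence.} I would introduce the continuous, non-increasing function
\[
g(K):=\int_{\om}\left(\tfrac{\ga-1}{\ga}(F-K)_+\right)^{\frac{1}{\ga-1}}dx,\qquad K\in\mathbb{R}.
\]
Observing that $g(\sup_{\om}F)=0<\int_{\om}\n_0\,dx$ and, by \eqref{x40}, $g(\inf_{\om}F)\geq\int_{\om}\n_0\,dx$, the intermediate value theorem supplies some $K^*\in[\inf_{\om}F,\sup_{\om}F)$ with $g(K^*)=\int_{\om}\n_0\,dx$. Setting
\[
\n_s:=\left(\tfrac{\ga-1}{\ga}(F-K^*)_+\right)^{\frac{1}{\ga-1}}\in L^\infty(\om),
\]
a direct chain-rule computation (using $\na F\in L^\infty(\om)$) shows that $\na P(\n_s)=\n_s\na F$ holds in $\mathcal{D}'(\om)$. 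Together with $u_s\equiv 0$ and the normalization $\int_{\om}\n_s\,dx=\int_{\om}\n_0\,dx$, this provides the claimed $L^\infty$ solution of \eqref{1.8}.

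\emph{Vacuum.} Suppose, for contradiction, that some solution $(\n_s,0)$ with $\n_s\in L^\infty_{\mathrm{loc}}(\om)$ is essentially bounded away from zero, i.e., $\n_s\geq c$ a.e.\ on $\om$ for some $c>0$. From the distributional identity $\na\n_s^{\ga}=\n_s\na F\in L^\infty_{\mathrm{loc}}(\om)$ I would deduce $\n_s^{\ga}\in W^{1,\infty}_{\mathrm{loc}}(\om)$; combined with $\n_s\geq c$, an elementary chain-rule bootstrap upgrades this to $\n_s^{\ga-1}\in W^{1,\infty}_{\mathrm{loc}}(\om)$ with
\[
\na\n_s^{\ga-1}=\tfrac{\ga-1}{\ga}\na F\quad\text{a.e.\ in }\om.
\]
Connectedness of $\om$ then forces $\n_s^{\ga-1}=\tfrac{\ga-1}{\ga}(F-K)$ a.e.\ for some constant $K$, and the lower bound $\n_s\geq c$ requires $K<\inf_{\om}F$ strictly. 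Integrating yields
\[
\int_{\om}\n_0\,dx=\int_{\om}\n_s\,dx>\int_{\om}\left(\tfrac{\ga-1}{\ga}(F-\inf_{\om}F)\right)^{\frac{1}{\ga-1}}dx,
\]
contradicting \eqref{x40}.

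\emph{Expected main obstacle.} The delicate point is the regularity bootstrap: upgrading only $\n_s\in L^\infty_{\mathrm{loc}}$ and the distributional PDE, together with the assumption $\n_s\geq c>0$ a.e., into weak differentiability of $\n_s^{\ga-1}$ with the identified gradient, so that the standard ``constant-on-a-connected-set'' conclusion applies rigorously. Everything else reduces to routine IVT and strict monotonicity of $g$.
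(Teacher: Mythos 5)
Your proposal is correct, and for the \emph{existence} part it is essentially identical to the paper's (the decreasing function $S(k)$ there is your $g(K)$, and the IVT argument and choice of $\rho_s$ are the same). For the \emph{vacuum} part, however, you take a genuinely different and more self-contained route. The paper splits \eqref{x40} into the equality case \eqref{mk6} and the strict case \eqref{mk9}: in the equality case it invokes Erban's uniqueness result (Lemma~\ref{lem4}(i), with $m\geq\hat m$) to identify $\rho_s$ with the explicit formula \eqref{mk8}, which visibly vanishes; in the strict case it assumes $\rho_s>0$ on $\ol{\om}$ and simply \emph{asserts} that then $\rho_s=\left(\tfrac{\ga-1}{\ga}(F-k)\right)^{1/(\ga-1)}$ with $k<\inf_\om F$, from which the mass contradiction follows. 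You instead handle both cases at once by doing the regularity bootstrap explicitly: from $\na\rho_s^\ga=\rho_s\na F\in L^\infty_{\mathrm{loc}}$ you get $\rho_s^\ga\in W^{1,\infty}_{\mathrm{loc}}$, and under the lower bound $\rho_s\geq c$ the chain rule gives $\na\rho_s^{\ga-1}=\tfrac{\ga-1}{\ga}\na F$, whence connectedness of $\om$ forces the form $\rho_s^{\ga-1}=\tfrac{\ga-1}{\ga}(F-K)$ with $K<\inf_\om F$; integrating then contradicts \eqref{x40} whether or not equality holds there. This buys two things: it avoids citing Lemma~\ref{lem4} altogether, and it actually supplies the justification the paper's Case~2 leaves implicit (namely, \emph{why} a nowhere-vanishing $L^\infty_{\mathrm{loc}}$ solution must take the explicit form \eqref{mk10}). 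The only point worth flagging is that you argue against ``$\rho_s$ essentially bounded away from zero'' rather than directly concluding ``vacuum on $\ol{\om}$''; to close that gap fully one should note that the bootstrap makes $\rho_s$ continuous on $\om$, so $\operatorname{ess\,inf}_\om\rho_s=0$ together with compactness of $\ol{\om}$ yields a point of vanishing in $\ol{\om}$. This is a minor clarification rather than a substantive gap, since the paper itself is equally informal about the meaning of ``contains vacuum on $\ol{\om}$''.
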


\begin{proof} Define a function $S : (- \infty, \displaystyle\sup_{\om} F]
  \to [0, \infty)$ by
  \be\ba\label{rs3}
    S(k) = \int_\om \left(\frac{\ga-1}{\ga}(F-k)_+\right)
    ^\frac{1}{\ga - 1} dx.
  \ea\ee
Then $S(k)$ is a decreasing continuous function
on $(-\infty, \displaystyle\sup_\om F]$,
and
\be\ba\la{mk1}
0 = S(\sup_\om F)< \int_\om \rho_s dx=\int_\Omega \rho_0 dx \leq S(\inf_\om F) = \int_\om \left(\frac{\ga-1}{\ga}( F-\inf_\om F)  \right)^{\frac{1}{\ga-1}}dx.
\ea\ee
This implies that there exists a constant $k_0$ satisfying $\displaystyle \inf_\om F \leq k_0 <
\sup_\om F$, such that  $(\rho_s,0)$ is a solution to system (\ref{1.8}), where
\be\ba\label{mk2}
\rho_s:=\left(\frac{\gamma-1}{\gamma }(F-k_0)_+\right)^{\frac{1}{\gamma-1}}dx.
\ea\ee
Especially, it is easy to see $\displaystyle k_0 = \inf_\om F$ if
\be\ba\label{mk5}
\int_\Omega\bigg(\frac{\gamma-1}{\gamma}\big(F-\inf_{\Omega}{F}\big)\bigg)^{\frac{1}{\gamma-1}}dx
= \int_\Omega \rho_0dx.
\ea\ee
That is, there exists a solution $(\rho_s,0)$ with
$\rho_s \in L^\infty(\om)$
containing vacuum on $\overline{\om}$ to system (\ref{1.8})
and moreover, if $\frac{3}{2}< \ga \leq 2$, then $\rho_s$ belongs to $W^{1,\infty}(\om)$.

Next, we state that under the assumption (\ref{x40}), if $(\rho_s,0)$ is a solution to (\ref{1.8}) with
$\rho_s \in L^\infty_{loc}(\om)$, then $\rho_s$ must contain vacuum on $\overline{\om}$.
In this sense, condition (\ref{mas}) is therefore of indispensability in this paper.
To this end, we consider the following two  cases.

Case 1.  If
\be\ba\label{mk6}
\int_\Omega\bigg(\frac{\gamma-1}{\gamma}\big(F-\inf_{\Omega}{F}\big)\bigg)^{\frac{1}{\ga-1}}dx
= \int_\Omega \rho_0dx,
\ea\ee
then
\be\ba\la{mk7}
  \int_\om \rho_0 dx
  &= \int_\om\left(\frac{\ga-1}{\ga}(F-\inf_\om F)\right)^\frac{1}{\ga-1}dx\\
  &\geq  \int_\om \left(\frac{\ga-1}{\ga}( F-\hat{K})_+  \right)^{\frac{1}{\ga-1}}dx =\hat{m} ,
\ea\ee
by  the fact $\displaystyle \inf_{\om} F \leq \hat{K}$ due to the
definition of $\hat{B}$.
Hence by Lemma \ref{lem4} (\romannumeral1), it holds that $(\rho_s, 0)$
with
\be\la{mk8}
  \rho_s = \left(\frac{\ga-1}{\ga}( F-\inf_\om F)  \right)^{\frac{1}{\ga-1}}
\ee
is the unique solution to (\ref{1.8}). Obviously, $\rho_s$ contains vacuum on $\ol{\om}$.

Case 2.  If
\be\ba\label{mk9}
\int_\Omega\bigg(\frac{\gamma-1}{\gamma}\big(F-\inf_{\Omega}{F}\big)\bigg)^{\frac{1}{\ga-1}}dx
> \int_\Omega \rho_0dx,
\ea\ee
and contradictorily assume that there exists a solution $(\rho_s, 0)$ to (\ref{1.8})
with $\rho_s > 0$ on $\ol{\om}$, {then $\rho_s$  satisfies}
\be\la{mk10}
  \rho_s = \left(\frac{\ga-1}{\ga}( F-k)  \right)^{\frac{1}{\ga-1}}
\ee
with $k < \displaystyle\inf_\om F$. Hence it implies that
\be\ba\la{mk11}
\int_\om \rho_s dx = \int_\om\left(\frac{\ga-1}{\ga}( F-k)  \right)^{\frac{1}{\ga-1}}dx > \int_\om \left(\frac{\ga-1}{\ga}( F-\inf_\om F)  \right)^{\frac{1}{\ga-1}}dx,
\ea\ee
which is in contradictory to  (\ref{mk9}). So far,  we finish the proof of Lemma \ref{lem5}.
\end{proof}

\begin {thebibliography} {99}

\bibitem{AF} {\sc R.A. Adams, J.J.F. Fournier}, {\em
Sobolev Spaces}, 2nd ed., Pure and Applied Mathematics(Amsterdam), vol.140, Elsevier/Academic Press, Amsterdam, 2003.

\bibitem{B} {\sc M.E. Bogovskii}, {\em
Solutions of some problems of vector analysis, associated with the operators \div and grad},
 Theory of Cubature Formulas and the Application of Functional Analysis to Problems
of Mathematical Physics, Trudy Sem. S.L. Soboleva, vol. 1, Akad. Nauk SSSR Sibirsk. Otdel.,
Inst. Mat. Novosibirsk, 1980, 5--40, 149 (Russian).

\bibitem{CHS}{\sc G.C. Cai, B. Huang, X.D. Shi}, {\em On compressible Navier-Stokes equations subject to large
potential forces with slip boundary conditions in 3D
bounded domains}(2021),
arXiv: 2101.12572v1.

\bibitem{DL} {\sc R.J. Diperna, P.L. Lions}, {\em Ordinary differential equations, transport theory and
Sobolev spaces},
 Invent. Math, {\bf 98} (1989), no. 3, 511--547.

\bibitem{E} {\sc R. Erban}, {\em On the static-limit solutions to the Navier-Stokes equations of compressible
flow},
J. Math. Fluid Mech. 3 (2001), no. 4, 393–408.

\bibitem{F} {\sc E. Feireisl}, {\em
Dynamics of Viscous Compressible Fluids}, Oxford Lecture Series in Mathematics and
its Applications vol. 26, Oxford University Press, Oxford, 2004.

\bibitem{FNP} {\sc E. Feireisl, A. Novotn\'{y}, H. Petzeltov\'{a}}, {\em
On the existence of globally defined weak solutions to the Navier-Stokes equations},
 J. Math. Fluid Mech, {\bf 3}(2001), 358--392.

\bibitem{FP0} {\sc E. Feireisl, H. Petzeltov\'a}, {\em On the zero-velocity-limit solutions to the Navier-Stokes equations of compressible flow},
Manuscripta Math. {\bf 97} (1998), no.~1, 109--116.

\bibitem{FP} {\sc E. Feireisl, H. Petzeltov\'{a}}, {\em Large-time behaviour of solutions to the Navier-Stokes equations of compressible flow},
Arch. Ration. Mech. Anal. {\bf 150} (1999) 77--96.

\bibitem{FZZ}{\sc D.Y. Fang, R.Z. Zi, T. Zhang}, {\em Decay estimates for isentropic compressible Navier-Stokes
equations in bounded domain},
J. Math. Anal. Appl. {\bf 386} (2012), no. 2, 939--947.

\bibitem{G} {\sc G.P. Galdi}, {\em An Introduction to the Mathematical Theory of the Navier-Stokes Equations. Vol. I:
Linearized Steady Problems}, Springer Tracts in Natural Philosophy, vol. 38, Springer-Verlag, New
York, 1994.

\bibitem{H1}{\sc D. Hoff}, {\em Global solutions of the Navier-Stokes equations for multidimensional compressible flow with discontinuous initial data},
J. Differ. Equ. {\bf 120} (1995), no.~1, 215--254.

\bibitem{H2}{\sc D. Hoff}, {\em Strong convergence to global solutions for multidimensional flows of compressible, viscous fluids with polytropic equations of state and discontinuous initial data},
Arch. Ration. Mech. Anal. {\bf 132} (1995), no.~1, 1--14.

\bibitem{HLX}{\sc F. Huang, J. Li, Z. Xin}, {\em Convergence to equilibria and blowup behavior of global strong solutions to the Stokes approximation equations
for two-dimensional compressible flows with large data}, J. Math. Pures Appl. {\bf 86} (9) (2006), no.~6, 471--491.

\bibitem{JZ}{\sc S. Jiang, P. Zhang}, {\em On spherically symmetric solutions of the compressible isentropic Navier-Stokes equations},
 Comm. Math. Phys. {\bf 215} (2006), no.~3, 559--581.

\bibitem{L1} {\sc P.L. Lions}, {\em Mathematical Topics in Fluid Mechanics. Vol. 1: Incompressible Models},
Oxford Lecture Series in Mathematics and its Applications, vol. 3, The Clarendon Press, Oxford University
Press, New York, 1996. Oxford Science Publications.

\bibitem{L2} {\sc P.L. Lions}, {\em Mathematical Topics in Fluid Mechanics. Vol. 2: Compressible Models},
Oxford Lecture Series in Mathematics and its Applications, vol. 10, The Clarendon Press, Oxford University
Press, New York, 1996. Oxford Science Publications.

\bibitem{LZZ}{\sc J. Li, J.W. Zhang, J.N. Zhao}, {\em On the global motion of viscous compressible
barotropic flows subject to large external potential forces and vacuum},
SIAM J. Math. Anal. {\bf 47} (2015), 1121--1153.

\bibitem{MN1}{\sc A. Matsumura, T. Nishida}, {\em The initial value problem for the equations of motion
of viscous and heat-conductive gases},
J. Math. Kyoto Univ. {\bf 20}(1) (1980), 67--104.

\bibitem{MN2}{\sc A. Matsumura, T. Nishida}, {\em Initial boundary value problems for the equations of
motion of general fluids},
in: Proc. Fifth Int. Symp. on Computing Methods in
Appl. Sci. and Engng. (1982), 389--406.

\bibitem{MN3}{\sc A. Matsumura, T. Nishida}, {\em Initial value problem for the equations of motion of
compressible viscous and heat-conductive fluids},
Comm. Math. Phys. {\bf 89} (1983), 445--464.

\bibitem{MP} {\sc A. Matsumura, M. Padula}, {\em Stability of the stationary solutions of compressible viscous fluids with large external forces},
 Stab. Appl. Anal. Cont. Media {\bf 2} (1992) 183--202.

\bibitem{N}{\sc J. Nash}, {\em Le probl\`{e}me de Cauchy pour les \'{e}quations diff\'{e}rentielles d'un fluide g\'{e}n\'{e}ral},
 Bull. Soc. Math. France {\bf 90} (1962), 487--497 (French).

\bibitem{NS}{\sc A. Novotn\'{y}, I. Str\v{a}skraba}, {\em Stabilization of weak solutions to compressible Navier-Stokes equations},
J. Math. Kyoto Univ. {\bf 40} (2) (2000), 217--245.

\bibitem{P}{\sc M. Padula}, {\em On the exponential stability of the rest state of a viscous compressible
fluid}, J. Math. Fluid Mech. {\bf 1} (1999), no. 1, 62--77.

\bibitem{PSW} {\sc Y.F. Peng, X.D. Shi, Y.S. Wu}, {\em Exponential decay for Lions-Feireisl's weak solutions to the barotropic com
pressible Navier-Stokes equations in 3D bounded domains}, Indiana U. Math. J. {\bf 70} (2021), no. 5, 1813--1831.

\bibitem{Se}{\sc J. Serrin}, {\em On the uniqueness of compressible fluid motion},
 Arch. Ration.  Mech. Anal. {\bf 3} (1959), 271--288.

\bibitem{So}{\sc V.A. Solonnikov}, {\em Solvability of initial boundary value problem for the equation
of motion of viscous compressible fluid},
 Steklov Inst. Seminars in Math. Leningrad {\bf 56} (1976), 128--142.

\bibitem{ST}{\sc Y. Shibata, K. Tanaka}, {\em On the steady flow of compressible viscous fluid and its
stability with respect to initial disturbance},
J. Math. Soc. Japan {\bf 55} (3) (2003), 797--826.

\bibitem{T}{\sc A. Tani}, {\em On the first initial-boundary value problem of compressible viscous fluid},
motion. Publ. RIMS Kyoto Univ. {\bf 13} (1977), 193--253.

\end {thebibliography}

\end{document}